\providecommand{\U}[1]{\protect\rule{.1in}{.1in}}
\newtheorem{theorem}{Theorem}[section]
\theoremstyle{plain}
\newtheorem{lemma}[theorem]{Lemma}
\newtheorem{proposition}[theorem]{Proposition}
\newtheorem{remark}[theorem]{Remark}
\numberwithin{equation}{section}
\begin{document}
\title[Existence and uniqueness for...]{Existence and uniqueness for nonlinear anisotropic elliptic equations}
\author{R. Di Nardo}
\address{Dipartimento di Matematica e Applicazioni \textquotedblright R.
Caccioppoli\textquotedblright, Universit\'{a} degli Studi di Napoli Federico
II, \\
Complesso Universitario Monte S. Angelo, via Cintia - Napoli}
\email{rosaria.dinardo@unina.it }
\author{F. Feo}
\address{Dipartimento per le Tecnologie\\
Universit\'{a} degli Studi di Napoli Parthenope\\
Centro Direzionale, Isola C4 - Napoli}
\email{filomena.feo@uniparthenope.it}
\subjclass[2000]{ 35A02, 35A01}
\keywords{Existence, Uniqueness, Weak solutions, A priori estimates, Anisotropic
elliptic equations}

\begin{abstract}
We study the existence and uniqueness for weak solutions to some classes of
anisotropic elliptic Dirichlet problems with data belonging to the natural
dual space.

\end{abstract}
\maketitle




\numberwithin{equation}{section}

\section{Introduction}

In the present paper we study the existence and uniqueness of weak solutions
to some classes of anisotropic elliptic equations with homogeneous Dirichlet
boundary conditions.

\noindent Let us consider the following model problem%

\begin{equation}
\left\{
\begin{array}
[c]{lll}%
-\partial_{x_{i}}(a_{i}(x,u)(\varepsilon+\left\vert \partial_{x_{i}%
}u\right\vert ^{2})^{\!\frac{p_{i}-2}{2}\!}\partial_{x_{i}}u)=\!f-\partial
_{x_{i}}g_{i} & \text{in} & \Omega\\
&  & \\
u=0 & \text{on} & \partial\Omega,
\end{array}
\right.  \label{prototipo}%
\end{equation}
where $\Omega$ is a bounded open subset of $\mathbb{R}^{N}$ with Lipschitz
continuous boundary, $N\geq2,$ $1<p_{1},\ldots,p_{N}<+\infty,$ $\varepsilon
\geq0,$ $a_{1},...,a_{N}$ are Carath\'{e}odory functions, $g_{1},..,g_{N}$ and
$f$ are functions belonging to suitable Lebesgue spaces.

The anisotropy of the problem is due to the growth in each partial derivative
controlled by different powers. The interest in studying such operators are
motivated by their applications to the mathematical modeling of physical and
mechanical processes in anisotropic continuous medium.

The existence and regularity of weak solutions or solutions in the
distributional sense to Problem (\ref{prototipo}) with $g_{i}\equiv0$ have
been studied in \cite{li} when $f\in L^{m}(\Omega)$ with $m>1$, in \cite{di
castro} when datum $f\ $belongs to Marcinkiewicz spaces and in \cite{boccardo
anisotropo} for measure data. In \cite{cianchi} a comparison theorem and the
derived a priori estimates are established via symmetrization methods. Some
uniqueness results for Problem (\ref{prototipo}) have been obtained in
\cite{chipot} for weak solutions and data belonging to the dual of the
anisotropic Sobolev space. Moreover when the datum $f$ is only integrable the
uniqueness of a renormalized solution is proved\ in \cite{olivier}.

As far as the uniqueness of a weak solution to Problem (\ref{prototipo}) is
concerned, when $\varepsilon=0$ and at least one $p_{i}$ is less or equals to
$2$ in this paper we obtain the same uniqueness result of \cite{chipot} by a
different method. Instead we improve the result of \cite{chipot} when
$\varepsilon>0$ and every $p_{i}$ is greater than $2.$ The main tools in our
proofs are Poincar\'{e} inequality and the embedding for the anisotropic
Sobolev spaces.

We also consider a class of anisotropic equations with a first order term,
whose prototype is
\begin{equation}
\left\{
\begin{array}
[c]{lll}%
\!-\partial_{x_{i}}\!(\!a(x,u)(\varepsilon+\left\vert \partial_{x_{i}%
}u\right\vert ^{2})^{\!\frac{p_{i}-2}{2}\!}\!\partial_{x_{i}}u\!)\!+\!\overset
{N}{\underset{i=1}{\sum}}\!b_{i}\left\vert \partial_{x_{i}}u\right\vert
^{p_{i}-1}\!\!=\!f\!-\!\partial_{x_{i}}g_{i}\! & \!\text{in}\! & \!\Omega\!\\
\!u=0 & \!\text{on}\! & \!\partial\Omega,\!
\end{array}
\right.  \label{prototipo 2}%
\end{equation}
where $b_{i}$ belong to suitable Lebesgue spaces for $i=1,..,N.$ To our
knowledge, such a problem is not still deeply studied. As far as the existence
of a weak solution is concerned, the presence of a lower order term produces a
lack of coerciveness, which does not allow to use the classical methods. Here
we prove the existence of a weak solution to Problem (\ref{prototipo 2}). As
usual the main step in the proof is an a priori estimate. In order to avoid
the assumption on smallness of the norm of the coefficients $b_{i}$ (that
implies the coerciveness of the operator), we adapt the method used in
\cite{bottaro} (see also \cite{betta CRAS}, \cite{livier esistenza}, \cite{gm}
and \cite{gm2}), which consists in splitting the domain $\Omega$ in a finite
number of small domains $\Omega_{1},..,\Omega_{t}$ in such a way to have small
norms of the coefficients on $\Omega_{\sigma}$ for $\sigma=1,..,t$. Finally we
consider a different class of anisotropic operator, whose lower order term
satisfy a Lipschitz condition in order to obtain same uniqueness results
following the idea of \cite{alvino}.

Problems (\ref{prototipo}) and (\ref{prototipo 2}) have been studied in the
isotropic case by many authors. We just mention some of these papers:
\cite{betta-mercaldo}, \cite{Chiacchio}, \cite{del vecchio} and \cite{del
vecchio post} for existence and regularity of weak solutions and
\cite{alvino}, \cite{casado}, \cite{guibe-mercaldo} and \cite{porretta} for
the uniqueness (see also the references therein).

The paper is organized as follows. In \S \ 2 we recall the standard framework
of anisotropic Sobolev spaces, we detail the assumptions and we give the
notion of weak solution. In \S \ 3 we study the case of strongly monotone
operator. Finally we investigate Problem (\ref{prototipo 2}): in \S 4.1 we
prove the existence of at least a weak solution and in \S \ 4.2 we prove some
uniqueness results.

\section{Definitions, assumptions and preliminaries results}

\subsection{Anisotropic Sobolev spaces}

Let $\Omega$ be a bounded open subset of $\mathbb{R}^{N}$ ($N\geq2$) with
Lipschitz continuous boundary and let $1<p_{1},\ldots,p_{N}<\infty$ be $N$
real numbers. The anisotropic space (see e.g. \cite{troisi})
\[
W^{1,\overrightarrow{p}}(\Omega)=\left\{  u\in W^{1,1}(\Omega):\partial
_{x_{i}}u\in L^{p_{i}}(\Omega),i=1,...,N\right\}
\]
is a Banach space with respect to norm $\left\Vert u\right\Vert
_{W^{1,\overrightarrow{p}}(\Omega)}=\left\Vert u\right\Vert _{L^{1}(\Omega
)}+\overset{N}{\underset{i=1}{\sum}}\left\Vert \partial_{x_{i}}u\right\Vert
_{L^{p_{i}}(\Omega)}.$ The space $W_{0}^{1,\overrightarrow{p}}(\Omega)$ is the
closure of $C_{0}^{\infty}(\Omega)$ with respect to this norm$.$

\noindent We recall a Poincar\'{e}-type inequality. Let $u\in W_{0}%
^{1,\overrightarrow{p}}(\Omega)$, then for every $q\geq1$ there exists a
constant $C_{P}$ (depending on $q$ and $i$) such that (see \cite{fargala})
\begin{equation}
\left\Vert u\right\Vert _{L^{q}(\Omega)}\leq\ C_{P}\left\Vert \partial_{x_{i}%
}u\right\Vert _{L^{q}(\Omega)}\text{ \ for }i=1,\ldots,N. \label{dis poincare}%
\end{equation}
Moreover a Sobolev-type inequality holds. Let us denote by $\overline{p}$ the
harmonic mean of these numbers, i.e. $\frac{1}{\overline{p}}=\frac{1}%
{N}\overset{N}{\underset{i=1}{\sum}}\frac{1}{p_{i}}$. Let $u\in W_{0}%
^{1,\overrightarrow{p}}(\Omega)$, then there exists (see \cite{troisi}) a
constant $C_{S}$ such that%
\begin{equation}
\left\Vert u\right\Vert _{L^{q}(\Omega)}\leq C_{S}\overset{N}{\underset
{i=1}{{\displaystyle\prod}}}\left\Vert \partial_{x_{i}}u\right\Vert
_{L^{p_{i}}(\Omega)}^{\frac{1}{N}}, \label{sobolev 2}%
\end{equation}
where $q=\overline{p}^{\ast}=\frac{N\overline{p}}{N-\overline{p}}$ if
$\overline{p}<N$ or $q\in\left[  1,+\infty\right[  $ if $\overline{p}\geq N$.
On the right-hand side of (\ref{sobolev 2}) it is possible to replace the
geometric mean by the arithmetic mean: let $a_{1},...,a_{N}$ be positive
numbers$,$ it holds%
\begin{equation}
\overset{N}{\underset{i=1}{{\displaystyle\prod}}}a_{i}^{\frac{1}{N}}\leq
\frac{1}{N}\overset{N}{\underset{i=1}{\sum}}a_{i}, \label{geo-arti}%
\end{equation}
which implies by (\ref{sobolev 2})%
\begin{equation}
\left\Vert u\right\Vert _{L^{q}(\Omega)}\leq\frac{C_{S}}{N}\overset
{N}{\underset{i=1}{\sum}}\left\Vert \partial_{x_{i}}u\right\Vert _{L^{p_{i}%
}(\Omega)}. \label{sobolev}%
\end{equation}
When
\begin{equation}
\overline{p}<N \label{cond immersione}%
\end{equation}
hold, inequality (\ref{sobolev}) implies the continuous embedding of the space
$W_{0}^{1,\overrightarrow{p}}(\Omega)$ into $L^{q}(\Omega)$ for every
$q\in\lbrack1,\overline{p}^{\ast}]$. On the other hand the continuity of the
embedding $W_{0}^{1,\overrightarrow{p}}(\Omega)\subset L^{p_{+}}(\Omega)$ with
$p_{+}:=\max\{p_{1},...,p_{N}\}$ relies on inequality (\ref{dis poincare}). It
may happen that $\overline{p}^{\ast}<p_{+}$ if the exponents $p_{i}$ are not
closed enough, then $p_{\infty}:=\max\{\overline{p}^{\ast},p_{+}\}$ turns out
to be the critical exponent in the anisotropic Sobolev embedding (see
\cite{fargala}).

\begin{proposition}
\label{propositione 1}If condition (\ref{cond immersione}) holds, then for
$q\in\lbrack1,p_{\infty}]$ there is a continuous embedding $W_{0}%
^{1,\overrightarrow{p}}(\Omega)\subset L^{q}(\Omega)$. For $q<p_{\infty}$ the
embedding is compact.
\end{proposition}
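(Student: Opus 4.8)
The plan is to combine the two embedding mechanisms already described in the excerpt. For the continuity statement, I would argue as follows. Inequality \eqref{sobolev} gives directly the continuous embedding $W_{0}^{1,\overrightarrow{p}}(\Omega)\subset L^{q}(\Omega)$ for every $q\in[1,\overline{p}^{\ast}]$, since the right-hand side of \eqref{sobolev} is bounded by $\left\Vert u\right\Vert _{W^{1,\overrightarrow{p}}(\Omega)}$. Separately, the Poincar\'{e}-type inequality \eqref{dis poincare} with $q=p_{+}$ (applied with the index $i$ for which $p_{i}=p_{+}$) gives $\left\Vert u\right\Vert _{L^{p_{+}}(\Omega)}\leq C_{P}\left\Vert \partial_{x_{i}}u\right\Vert _{L^{p_{+}}(\Omega)}$, and since $\Omega$ is bounded one has $\left\Vert \partial_{x_{i}}u\right\Vert _{L^{p_{+}}(\Omega)}\le |\Omega|^{\frac{1}{p_{+}}-\frac{1}{p_{i}}}\left\Vert \partial_{x_{i}}u\right\Vert _{L^{p_{i}}(\Omega)}$ when $p_{i}=p_{+}$ — in fact here the two norms coincide — hence $W_{0}^{1,\overrightarrow{p}}(\Omega)\subset L^{p_{+}}(\Omega)$ continuously. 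Interpolation between $L^{\overline{p}^{\ast}}$ and $L^{p_{+}}$ (when $\overline{p}^{\ast}<p_{+}$), or simply the inclusion of Lebesgue spaces on a bounded set together with the larger of the two exponents, then yields the continuous embedding into $L^{q}(\Omega)$ for every $q\in[1,p_{\infty}]$ with $p_{\infty}=\max\{\overline{p}^{\ast},p_{+}\}$.

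For the compactness statement with $q<p_{\infty}$, I would first establish compactness of $W_{0}^{1,\overrightarrow{p}}(\Omega)\hookrightarrow L^{1}(\Omega)$ (or into $L^{r}(\Omega)$ for some $r>1$) and then upgrade to all $q<p_{\infty}$ by an interpolation argument. The compactness into $L^{1}$ follows from the Rellich--Kondrachov theorem applied coordinate-wise: a bounded sequence $(u_{n})$ in $W_{0}^{1,\overrightarrow{p}}(\Omega)$ is bounded in $W^{1,1}(\Omega)$ (indeed each $\partial_{x_{i}}u_{n}$ is bounded in $L^{p_{i}}(\Omega)\subset L^{1}(\Omega)$ and $u_{n}$ is bounded in $L^{1}$), so it has a subsequence converging strongly in $L^{1}(\Omega)$. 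Now take any $q<p_{\infty}$; choose $\tilde q$ with $q<\tilde q\le p_{\infty}$ so that the embedding into $L^{\tilde q}(\Omega)$ is continuous (possible precisely because $q<p_{\infty}$). Writing $\frac{1}{q}=\frac{\theta}{1}+\frac{1-\theta}{\tilde q}$ for a suitable $\theta\in(0,1)$, the interpolation inequality gives
\[
\left\Vert u_{n}-u_{m}\right\Vert _{L^{q}(\Omega)}\le\left\Vert u_{n}-u_{m}\right\Vert _{L^{1}(\Omega)}^{\theta}\left\Vert u_{n}-u_{m}\right\Vert _{L^{\tilde q}(\Omega)}^{1-\theta},
\]
and the first factor tends to $0$ along the subsequence while the second stays bounded, so $(u_{n})$ is Cauchy in $L^{q}(\Omega)$.

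The main obstacle I anticipate is organizing the endpoint bookkeeping cleanly: one must be careful that when $\overline{p}^{\ast}\ge p_{+}$ the critical exponent is $\overline{p}^{\ast}$ and compactness holds strictly below it (standard Rellich--Kondrachov), whereas when $p_{+}>\overline{p}^{\ast}$ the continuous embedding into $L^{p_{+}}$ is only via Poincar\'{e} in a single direction and is \emph{not} compact at the endpoint $q=p_{+}$, so the strict inequality $q<p_{\infty}$ is essential. The interpolation trick above handles both regimes uniformly, which is why I would route the compactness proof through $L^{1}$-compactness plus interpolation rather than trying to argue compactness directly at each exponent.
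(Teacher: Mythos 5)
Your argument is correct. The paper itself does not prove Proposition \ref{propositione 1}: it quotes the result from \cite{fargala}, and the paragraph preceding the statement already records exactly your continuity mechanism --- inequality (\ref{sobolev}) for $q\le\overline{p}^{\ast}$ and the Poincar\'{e}-type inequality (\ref{dis poincare}) in the direction realizing $p_{+}$, combined via the inclusion of Lebesgue spaces on the bounded set $\Omega$ --- so that half coincides with the paper's own discussion. The compactness half, which the paper leaves entirely to the reference, you supply by a standard and valid route: a bounded sequence in $W_{0}^{1,\overrightarrow{p}}(\Omega)$ is bounded in $W^{1,1}(\Omega)$, hence Rellich--Kondrachov yields a subsequence converging strongly in $L^{1}(\Omega)$, and interpolation between $L^{1}$ and $L^{\tilde q}$ with $q<\tilde q\le p_{\infty}$ upgrades this to strong convergence in $L^{q}(\Omega)$ for every $q<p_{\infty}$; your closing remark correctly identifies why the strict inequality is essential at the endpoint.
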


\subsection{Assumptions and Definitions}

We consider the following class of nonlinear anisotropic elliptic homogeneous
Dirichlet problems%

\begin{equation}
\left\{
\begin{array}
[c]{lll}%
-\partial_{x_{i}}a_{i}(x,u,\nabla u)+\overset{N}{\underset{i=1}{\sum}}%
H_{i}(x,\nabla u)=f-\partial_{x_{i}}g_{i} & \text{in} & \Omega\\
u=0 & \text{on} & \partial\Omega,
\end{array}
\right.  \label{Problema}%
\end{equation}
where $\Omega$ is a bounded open subset of $\mathbb{R}^{N}$ with Lipschitz
continuous boundary $\partial\Omega$, $N\geq2$, $1<p_{1},\ldots,p_{N}<\infty$
and (\ref{cond immersione}) holds.

\noindent We assume that $a_{i}:\Omega\times\mathbb{R\times R}^{N}%
\rightarrow\mathbb{R}$ and $H_{i}:\Omega\times\mathbb{R}^{N}\rightarrow
\mathbb{R}$ are Carath\'{e}odory functions such that%
\begin{equation}
\overset{N}{\underset{i=1}{\sum}}a_{i}\left(  x,s,\xi\right)  \xi_{i}%
\geq\lambda\overset{N}{\underset{i=1}{\sum}}\left\vert \xi_{i}\right\vert
^{p_{i}}\quad\forall s\in\mathbb{R},\xi\in\mathbb{R}^{N}\text{ and a.e. in
$\Omega,$} \label{ellittic}%
\end{equation}%
\begin{equation}
\left\vert a_{i}\left(  x,s,\xi\right)  \right\vert \leq\gamma\left[
\left\vert s\right\vert ^{\frac{p_{\infty}}{p_{i}^{\prime}}}+\left\vert
\xi_{i}\right\vert ^{p_{i}-1}\right]  , \label{crescita}%
\end{equation}%
\begin{equation}
\left(  a_{i}\left(  x,s,\xi\right)  -a_{i}\left(  x,s,\xi^{\prime}\right)
\right)  \left(  \xi_{i}-\xi_{i}^{\prime}\right)  >0\text{ \ for }\xi_{i}%
\neq\xi_{i}^{\prime}, \label{monotonia}%
\end{equation}%
\begin{equation}
\left\vert H_{i}\left(  x,\xi\right)  \right\vert \leq b_{i}\left\vert \xi
_{i}\right\vert ^{p_{i}-1} \label{crescita H}%
\end{equation}
\noindent where $b_{i},\lambda,\gamma$ are some positive constants for
$i=1,..,N$.

\noindent Moreover we suppose that%
\begin{equation}
f\in L^{p_{\infty}^{\prime}}(\Omega) \label{ff}%
\end{equation}
and
\begin{equation}
g_{i}\in L^{p_{i}^{\prime}}(\Omega)\text{ \ for }i=1,..,N.\quad\label{g}%
\end{equation}

\noindent We observe that in (\ref{crescita H}) we can also assume that
$b_{i}\in L^{r_{i}}(\Omega)$ with $\frac{1}{r_{i}}=\frac{1}{p_{\infty}%
^{\prime}}-\frac{1}{p_{i}^{\prime}}$ for $i=1,..,N$.

\noindent Finally we recall the definition of weak solution. A function $u\in
W_{0}^{1,\overrightarrow{p}}(\Omega)$ is a weak solution to Problem
(\ref{Problema}) if
\[
\overset{N}{\underset{i=1}{\sum}}\int_{\Omega}\left[  a_{i}(x,u,\nabla
u)\varphi_{x_{i}}+H_{i}(x,\nabla u)\varphi\right]  =\int_{\Omega}\left[
f\varphi+\overset{N}{\underset{i=1}{\sum}}g_{i}\varphi_{x_{i}}\right]  ,\text{
\ \ }\forall\varphi\in W_{0}^{1,\overrightarrow{p}}(\Omega).
\]

\section{Strongly monotone operators}

In this section we consider Problem (\ref{Problema}) with $H_{i}\equiv0$ under
the assumptions of strongly monotonicity of the operator and Lipschitz
continuity of $a_{i}$. More precisely we study the following class of
nonlinear anisotropic elliptic homogeneous Dirichlet problems%

\begin{equation}
\left\{
\begin{array}
[c]{lll}%
-\partial_{x_{i}}a_{i}(x,u,\nabla u)=f-\partial_{x_{i}}g_{i} & \text{in} &
\Omega\\
u=0 & \text{on} & \partial\Omega.
\end{array}
\right.  \label{P0}%
\end{equation}
Let us assume that (\ref{ellittic})-(\ref{monotonia}), (\ref{ff}) and
(\ref{g}) hold and that functions $a_{i}$\ satisfy
\begin{equation}
\left(  a_{i}\left(  x,s,\xi\right)  -a_{i}\left(  x,s,\xi^{\prime}\right)
\right)  \left(  \xi_{i}-\xi_{i}^{\prime}\right)  \geq\alpha\left(
\varepsilon+\left\vert \xi_{i}\right\vert +\left\vert \xi_{i}^{\prime
}\right\vert \right)  ^{p_{i}-2}\left\vert \xi_{i}-\xi_{i}^{\prime}\right\vert
^{2} \label{monotonia forte}%
\end{equation}
with $\alpha>0$ and $\varepsilon\geq0$ and the following Lipschitz condition%
\begin{equation}
\left\vert a_{i}\left(  x,s,\xi\right)  -a_{i}\left(  x,s^{\prime},\xi\right)
\right\vert \leq\beta\left(  \theta+\left\vert \xi_{i}\right\vert ^{p_{i}%
-1}\right)  \left\vert s-s^{\prime}\right\vert \label{lip a}%
\end{equation}
with $\beta>0,\theta\geq0$ for $i=1,..,N.$

\noindent By the classical Leray-Lions theorem (see \cite{lions}) there exists
at least a weak solution (see also \cite{li}) to Problem (\ref{P0}) as in the
isotropic case.

As far as the uniqueness is concerned, we will investigate separately the case
when at least one $p_{i}\leq2$ and the case when every $p_{i}>2$ for
$i=1,..,N.$\ In this last case as for $p-$Laplace with $p>2$, there is no
uniqueness in general (see the counterexample in \cite{alvino}). Then
assumption (\ref{monotonia forte}) with $\varepsilon>0$ seems to be necessary
to get a uniqueness result.

\begin{theorem}
\label{th p>2}Let us assume $p_{i}>2$ for $i=1,..,N,$ (\ref{cond immersione}),
(\ref{ellittic}), (\ref{crescita}),(\ref{ff}), (\ref{g}),
(\ref{monotonia forte}) with $\varepsilon>0$ and (\ref{lip a})$.$ Then there
exists a unique weak solution to Problem (\ref{P0}).
\end{theorem}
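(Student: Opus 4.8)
Since existence is already guaranteed by the Leray--Lions theorem, the plan is devoted entirely to uniqueness. Let $u_{1},u_{2}\in W_{0}^{1,\overrightarrow{p}}(\Omega)$ be two weak solutions of Problem \eqref{P0} and set $w:=u_{1}-u_{2}$. Subtracting the two weak formulations, for every $\varphi\in W_{0}^{1,\overrightarrow{p}}(\Omega)$ one has
\[
\sum_{i=1}^{N}\int_{\Omega}\big(a_{i}(x,u_{1},\nabla u_{1})-a_{i}(x,u_{2},\nabla u_{2})\big)\varphi_{x_{i}}=0.
\]
First I would take $\varphi=w$ (to keep all intermediate integrals finite it is safer to use $\varphi=T_{k}(w)$, with $T_{k}(s)=\max\{-k,\min\{k,s\}\}$, and let $k\to+\infty$ at the end) and split, for each $i$,
\[
a_{i}(x,u_{1},\nabla u_{1})-a_{i}(x,u_{2},\nabla u_{2})=\big(a_{i}(x,u_{1},\nabla u_{1})-a_{i}(x,u_{1},\nabla u_{2})\big)+\big(a_{i}(x,u_{1},\nabla u_{2})-a_{i}(x,u_{2},\nabla u_{2})\big).
\]
By \eqref{monotonia forte} the first bracket, tested against $w_{x_{i}}=(u_{1})_{x_{i}}-(u_{2})_{x_{i}}$, is bounded below by $\alpha\,(\varepsilon+|(u_{1})_{x_{i}}|+|(u_{2})_{x_{i}}|)^{p_{i}-2}\,|w_{x_{i}}|^{2}\geq0$, while by \eqref{lip a} the second bracket is controlled in absolute value by $\beta\,(\theta+|(u_{2})_{x_{i}}|^{p_{i}-1})\,|w|$. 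Hence
\[
\alpha\sum_{i}\int_{\Omega}\big(\varepsilon+|(u_{1})_{x_{i}}|+|(u_{2})_{x_{i}}|\big)^{p_{i}-2}|w_{x_{i}}|^{2}\leq\beta\sum_{i}\int_{\Omega}\big(\theta+|(u_{2})_{x_{i}}|^{p_{i}-1}\big)|w|\,|w_{x_{i}}|.
\]

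The second step is to estimate the right-hand side by a weighted Young inequality, writing the integrand as the product of $(\varepsilon+|(u_{1})_{x_{i}}|+|(u_{2})_{x_{i}}|)^{(p_{i}-2)/2}|w_{x_{i}}|$ and $(\varepsilon+|(u_{1})_{x_{i}}|+|(u_{2})_{x_{i}}|)^{-(p_{i}-2)/2}(\theta+|(u_{2})_{x_{i}}|^{p_{i}-1})|w|$, so that the square of the first factor is absorbed by the left-hand side. The decisive point is the pointwise bound
\[
\frac{\big(\theta+|(u_{2})_{x_{i}}|^{p_{i}-1}\big)^{2}}{\big(\varepsilon+|(u_{1})_{x_{i}}|+|(u_{2})_{x_{i}}|\big)^{p_{i}-2}}\ \leq\ C\Big(\theta^{2}\varepsilon^{-(p_{i}-2)}+|(u_{2})_{x_{i}}|^{p_{i}}\Big)=:K_{i}(x),
\]
obtained by using $\varepsilon>0$ for the first summand and $\varepsilon+|(u_{1})_{x_{i}}|+|(u_{2})_{x_{i}}|\geq|(u_{2})_{x_{i}}|$ together with $2(p_{i}-1)-(p_{i}-2)=p_{i}$ for the second; here it is crucial that $p_{i}>2$, so that the exponent of $|(u_{2})_{x_{i}}|$ is exactly $p_{i}$ and $K_{i}\in L^{1}(\Omega)$ since $(u_{2})_{x_{i}}\in L^{p_{i}}(\Omega)$. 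Bounding also $(\varepsilon+\cdots)^{p_{i}-2}\geq\varepsilon^{p_{i}-2}$ from below, one arrives at an inequality of the form
\[
c_{0}\sum_{i}\|w_{x_{i}}\|_{L^{2}(\Omega)}^{2}\ \leq\ C\sum_{i}\int_{\Omega}K_{i}(x)\,w^{2},\qquad K_{i}\in L^{1}(\Omega),
\]
with $c_{0}=\alpha\min_{i}\varepsilon^{p_{i}-2}>0$.

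The last step — and the one I expect to be the real obstacle — is to deduce $w\equiv0$ from this estimate: since $K_{i}$ belongs only to $L^{1}(\Omega)$, the right-hand side cannot be absorbed by a crude H\"older estimate against $w\in L^{p_{\infty}}(\Omega)$. To handle it I would run the whole argument with $T_{k}(w)$ in place of $w$, so as to obtain $c_{0}\sum_{i}\|(T_{k}w)_{x_{i}}\|_{L^{2}(\Omega)}^{2}\leq C\sum_{i}\int_{\Omega}K_{i}\,(T_{k}w)^{2}$ with finite right-hand side, then combine Poincar\'e's inequality \eqref{dis poincare} (with $q=2$) and the anisotropic Sobolev embedding of Proposition \ref{propositione 1} to control $\|T_{k}w\|$ on the superlevel set $\{|w|>k\}$, and finally use the absolute continuity of the integrals $\int_{\{|w|>M\}}K_{i}$ (which tend to $0$ as $M\to+\infty$, since $|\{|w|>M\}|\to0$) — balancing the two parameters $k$ and $M$ — to conclude that $w=0$ a.e. It is exactly at this stage that the hypotheses $\varepsilon>0$ and $p_{i}>2$ enter decisively: they make the coercive term genuinely quadratic (through $\varepsilon^{p_{i}-2}>0$) and the weight $K_{i}$ summable, in accordance with the fact, recalled before the statement, that uniqueness fails in the regime $p_{i}>2$ when $\varepsilon=0$.
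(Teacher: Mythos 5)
Your first two steps (splitting the difference of the operators, using \eqref{monotonia forte} and \eqref{lip a}, then a weighted Young inequality exploiting $\varepsilon>0$ and $2(p_i-1)-(p_i-2)=p_i$ to produce an $L^1$ weight $K_i$) reproduce essentially the computation leading to the paper's (\ref{eq1})--(\ref{mm}). The genuine gap is exactly the step you flag as ``the real obstacle'': from
\[
c_0\sum_i\|w_{x_i}\|^2_{L^2(\Omega)}\le C\sum_i\int_\Omega K_i\,w^2,\qquad K_i\in L^1(\Omega),
\]
one cannot conclude $w\equiv 0$. This implication is false in general: take $K$ to be a large constant (which is in $L^1(\Omega)$ since $\Omega$ is bounded) and $w$ a first Dirichlet eigenfunction; the inequality holds with $w\not\equiv0$. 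Note that your $K_i$ actually contains the constant $\theta^2\varepsilon^{-(p_i-2)}$, so this is not a far-fetched objection. Your proposed repair does not close the gap either: truncating at a large height $k$ and using absolute continuity of $\int_{\{|w|>M\}}K_i$ only controls the tail, while the bulk term $\int_{\{|w|\le M\}}K_i w^2\le M^2\|K_i\|_{L^1}$ is large and cannot be absorbed by the left-hand side for any choice of $k$ and $M$; absorption by H\"older would require $K_i\in L^{N/2}$, which is not available.

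The missing idea in the paper is the choice of test function $\varphi=T_t(w)/t$ with $w=(u-v)^+$ and $t\to 0^+$ (truncation at a \emph{small} height, normalized), rather than $w$ or $T_k(w)$ with $k$ large. This has two effects. First, on the set $D_t=\{0<w<t\}$ where $\nabla\varphi$ lives one has $|u-v|\le t$, and this factor cancels the normalization $1/t$, so the Lipschitz term contributes only $(\theta+|\partial_{x_i}v|^{p_i-1})|\partial_{x_i}\varphi|$ with \emph{no} surviving factor of $w$; after Young and the lower bound $(\varepsilon+\cdot)^{p_i-2}\ge\varepsilon^{p_i-2}$ the right-hand side reduces to $c\,(N|D_t|+\sum_i\int_{D_t}|\partial_{x_i}v|^{p_i})$, which tends to $0$ as $t\to0$ by absolute continuity of the integral, since $|D_t|\to0$. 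Second, $\varphi=1$ on $D\setminus D_t=\{w\ge t\}$, so the $W^{1,1}\hookrightarrow L^{N/(N-1)}$ Sobolev inequality \eqref{sobolev} applied to $\varphi$ bounds $|D\setminus D_t|^{1-1/N}$ from above by these vanishing quantities, giving $|D|=\lim_{t\to0}|D\setminus D_t|=0$, i.e. $u\le v$, and uniqueness by symmetry. Without passing to level sets of vanishing measure near $0$ there is no mechanism that makes your $L^1$ weight small, so the argument as proposed cannot be completed.
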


\begin{proof}
Let $u$ and $v$ be two weak solutions to Problem (\ref{P0}). Let us denote
$w=\left(  u-v\right)  ^{+},D=\left\{  x\in\Omega:w>0\right\}  ,D_{t}=\left\{
x\in D:w<t\right\}  $ for $t\in\left[  0,\sup w\right[  $ and $T_{t}$ the
truncation function at height $t$. Suppose that $D$ has positive
measure.\ Using $\varphi=\frac{T_{t}(w)}{t}$ as test function in the
difference of the equations, we obtain
\[
\overset{N}{\underset{i=1}{\sum}}\int_{D_{t}}\left[  a_{i}\left(  x,u,\nabla
u\right)  -a_{i}\left(  x,v,\nabla v\right)  \right]  \partial_{x_{i}}%
\varphi\leq0.
\]
For the convenience of the reader we are explicitly writing the sum sign. By
(\ref{monotonia forte}) and (\ref{lip a}) we get
\begin{equation}
\overset{N}{\underset{i=1}{\sum}}\int_{D_{t}}\left(  \varepsilon+\left\vert
\partial_{x_{i}}u\right\vert +\left\vert \partial_{x_{i}}v\right\vert \right)
^{p_{i}-2}\left\vert \partial_{x_{i}}\varphi\right\vert ^{2}\leq\frac{\beta
}{\alpha}\overset{N}{\underset{i=1}{\sum}}\int_{D_{t}}\left(  \theta
+\left\vert \partial_{x_{i}}v\right\vert ^{p_{i}-1}\right)  \left\vert
\partial_{x_{i}}\varphi\right\vert . \label{eq1}%
\end{equation}
Using Young inequality with some $\delta>0$ we have%
\begin{align*}
&  \int_{D_{t}}\left(  \theta+\left\vert \partial_{x_{i}}v\right\vert
^{p_{i}-1}\right)  \left\vert \partial_{x_{i}}\varphi\right\vert \\
&  \leq\theta\left(  \frac{\delta}{2}\int_{D_{t}}\left\vert \partial_{x_{i}%
}\varphi\right\vert ^{2}+\frac{1}{4\delta}\left\vert D_{t}\right\vert \right)
+\frac{\delta}{2}\int_{D_{t}}\left(  \left\vert \partial_{x_{i}}%
\varphi\right\vert ^{2}\left\vert \partial_{x_{i}}v\right\vert ^{p_{i}%
-2}\right)  +\frac{1}{4\delta}\int_{D_{t}}\left\vert \partial_{x_{i}%
}v\right\vert ^{p_{i}},
\end{align*}
then by (\ref{eq1}), choosing $\delta$ small enough, we obtain
\begin{equation}
\overset{N}{\underset{i=1}{\sum}}\int_{D_{t}}\left(  \varepsilon+\left\vert
\partial_{x_{i}}u\right\vert +\left\vert \partial_{x_{i}}v\right\vert \right)
^{p_{i}-2}\left\vert \partial_{x_{i}}\varphi\right\vert ^{2}\leq c\left(
N\left\vert D_{t}\right\vert +\overset{N}{\underset{i=1}{\sum}}\int_{D_{t}%
}\left\vert \partial_{x_{i}}v\right\vert ^{p_{i}}\right)  \label{mm}%
\end{equation}
for some positive constant $c$ independent on $t$.$\ $By Young inequality,
(\ref{sobolev}) and (\ref{mm}) we obtain
\begin{align*}
NC_{S}\left\vert D\backslash D_{t}\right\vert ^{1-\frac{1}{N}}  &  \leq
NC_{S}\left\Vert \varphi\right\Vert _{L^{\frac{N}{N-1}}\left(  D\right)  }\\
&  \leq\overset{N}{\underset{i=1}{\sum}}\int_{D_{t}}\left\vert \partial
_{x_{i}}\varphi\right\vert \leq\frac{N}{2}\left\vert D_{t}\right\vert
+\frac{1}{2}\overset{N}{\underset{i=1}{\sum}}\int_{D_{t}}\left\vert
\partial_{x_{i}}\varphi\right\vert ^{2}\\
&  \leq\frac{N}{2}\left\vert D_{t}\right\vert +\frac{1}{2}\overset
{N}{\underset{i=1}{\sum}}\frac{1}{\varepsilon^{p_{i}-2}}\int_{D_{t}}\left\vert
\partial_{x_{i}}\varphi\right\vert ^{2}\left(  \varepsilon+\left\vert
\partial_{x_{i}}u\right\vert +\left\vert \partial_{x_{i}}v\right\vert \right)
^{p_{i}-2}\\
&  \leq\frac{N}{2}\left\vert D_{t}\right\vert +\frac{1}{2}c\left(  N\left\vert
D_{t}\right\vert +\overset{N}{\underset{i=1}{\sum}}\int_{D_{t}}\left\vert
\partial_{x_{i}}v\right\vert ^{p_{i}}\right)  .
\end{align*}
The last term tends to zero when $t$ goes to zero; this implies
\begin{equation}
\left\vert D\right\vert =\underset{t\rightarrow0}{\lim}\left\vert D\backslash
D_{t}\right\vert =0, \label{lim D}%
\end{equation}
from which the conclusion follows.
\end{proof}

\bigskip

This approach also works if we replace hypothesis (\ref{monotonia forte}) by
\begin{equation}
\left(  a_{i}\left(  x,s,\xi\right)  -a_{i}\left(  x,s,\xi^{\prime}\right)
\right)  \left(  \xi_{i}-\xi_{i}^{\prime}\right)  \geq\alpha\left\vert \xi
_{i}-\xi_{i}^{\prime}\right\vert ^{p_{i}}\text{ \ \ }\alpha>0
\label{diff a pi}%
\end{equation}
and hypothesis (\ref{lip a}) by%
\begin{equation}
\left\vert a_{i}\left(  x,s,\xi\right)  -a_{i}\left(  x,s^{\prime},\xi\right)
\right\vert \leq\beta\left(  \theta+\left\vert \xi_{i}\right\vert ^{p_{i}%
-1}+\left(  \left\vert s\right\vert +\left\vert s^{\prime}\right\vert \right)
^{q_{i}}\right)  \left\vert s-s^{\prime}\right\vert \label{lip omega}%
\end{equation}
\ for some $q_{i}>0$, $\beta>0,\theta>0$ for $i=1,..,N.$

\begin{theorem}
\label{th p>2 bis}Let us assume $p_{i}>2$ for $i=1,..,N,$
(\ref{cond immersione}), (\ref{ellittic}), (\ref{crescita}),(\ref{ff}),
(\ref{g}), (\ref{diff a pi}) and (\ref{lip omega}) with $0<q_{i}\leq
\frac{p_{\infty}}{p_{i}}$ Then there exists a unique weak solution to Problem
(\ref{P0}).
\end{theorem}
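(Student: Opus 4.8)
The plan is to run, essentially verbatim, the truncation argument used for Theorem~\ref{th p>2}, with (\ref{diff a pi}) taking the place of (\ref{monotonia forte}) and (\ref{lip omega}) taking the place of (\ref{lip a}); the only genuinely new ingredient is the control of the extra zeroth-order term $(|u|+|v|)^{q_i}$, and this is exactly the point at which the restriction $q_i\le p_\infty/p_i$ enters.

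First I would take two weak solutions $u,v$ of Problem~(\ref{P0}), set $w=(u-v)^+$, $D=\{w>0\}$, $D_t=\{0<w<t\}$ for $0<t<\sup w$, suppose $|D|>0$, and use $\varphi=T_t(w)/t\in W_0^{1,\overrightarrow{p}}(\Omega)$ as test function in the difference of the two weak formulations. The data $f$ and $g_i$ cancel, and since $\partial_{x_i}\varphi$ is supported in $D_t$, where it equals $t^{-1}\partial_{x_i}(u-v)$, multiplying through by $t$ gives
\[
\sum_{i=1}^{N}\int_{D_t}\bigl[a_i(x,u,\nabla u)-a_i(x,v,\nabla v)\bigr]\partial_{x_i}(u-v)=0 .
\]
Writing $a_i(x,u,\nabla u)-a_i(x,v,\nabla v)=[a_i(x,u,\nabla u)-a_i(x,u,\nabla v)]+[a_i(x,u,\nabla v)-a_i(x,v,\nabla v)]$, I would bound the first bracket from below by (\ref{diff a pi}) and the second one, using (\ref{lip omega}) together with $|u-v|=w<t$ on $D_t$, from above, reaching
\[
\alpha\sum_{i=1}^{N}\int_{D_t}|\partial_{x_i}(u-v)|^{p_i}\le\beta t\sum_{i=1}^{N}\int_{D_t}\Bigl(\theta+|\partial_{x_i}v|^{p_i-1}+(|u|+|v|)^{q_i}\Bigr)|\partial_{x_i}(u-v)| .
\]
Applying Young's inequality with conjugate exponents $p_i,p_i'$ to each summand on the right and choosing the parameter so that the $|\partial_{x_i}(u-v)|^{p_i}$-contribution is reabsorbed by the left-hand side, one is left with an estimate of the shape
\[
\sum_{i=1}^{N}\int_{D_t}|\partial_{x_i}(u-v)|^{p_i}\le c\,t\sum_{i=1}^{N}\int_{D_t}\Bigl(\theta^{p_i'}+|\partial_{x_i}v|^{p_i}+(|u|+|v|)^{q_ip_i'}\Bigr) .
\]

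The key step is then to check that the integrand on the right belongs to $L^1(\Omega)$: $\theta^{p_i'}$ is a constant, $|\partial_{x_i}v|^{p_i}\in L^1(\Omega)$ because $v\in W_0^{1,\overrightarrow{p}}(\Omega)$, and — since $q_ip_i'=q_i\,p_i/(p_i-1)\le p_\infty/(p_i-1)$ while $u,v\in W_0^{1,\overrightarrow{p}}(\Omega)\subset L^{p_\infty}(\Omega)$ by Proposition~\ref{propositione 1} (here (\ref{cond immersione}) is used) — we get $(|u|+|v|)^{q_ip_i'}\in L^{p_\infty/(q_ip_i')}(\Omega)\subset L^1(\Omega)$. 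Thus the right-hand side above is $t$ times the integral over $D_t$ of a fixed $L^1(\Omega)$ function; since $D_t$ decreases to $\emptyset$ as $t\to0^+$ and $|\Omega|<\infty$, this quantity tends to $0$ by absolute continuity of the integral, playing exactly the role that $\sum_i\int_{D_t}|\partial_{x_i}v|^{p_i}$ played in the proof of Theorem~\ref{th p>2}. From here I would conclude as in that proof: bound $\sum_i\int_{D_t}|\partial_{x_i}\varphi|$ in terms of the quantity just shown to vanish, insert this into the anisotropic Sobolev inequality applied to $\varphi$ (which is $\equiv1$ on $D\setminus D_t$), and let $t\to0$ to obtain $|D\setminus D_t|\to0$, hence $|D|=0$, i.e.\ $u\le v$ a.e.\ in $\Omega$; exchanging the roles of $u$ and $v$ then yields $u=v$.

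I expect the delicate point to be this final passage to the limit: in contrast with (\ref{monotonia forte}) for $\varepsilon>0$, the pure $p_i$-growth monotonicity (\ref{diff a pi}) does not furnish a uniform quadratic lower bound for the gradient differences, so the Young-inequality/absorption bookkeeping and the verification that the resulting bound on $\sum_i\int_{D_t}|\partial_{x_i}\varphi|$ still vanishes as $t\to0$ must be handled with some care. The one estimate that is genuinely new with respect to Theorem~\ref{th p>2} — and the one that forces the hypothesis $q_i\le p_\infty/p_i$ — is the $L^1$-integrability of $(|u|+|v|)^{q_ip_i'}$, obtained through the embedding $W_0^{1,\overrightarrow{p}}(\Omega)\subset L^{p_\infty}(\Omega)$.
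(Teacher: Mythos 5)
Your strategy coincides with the paper's own proof of this theorem: the authors likewise rerun the argument of Theorem~\ref{th p>2} with the test function $T_{t}(w)/t$, insert the extra term $\sum_{i}\int_{D_{t}}(|u|+|v|)^{q_{i}}|\partial_{x_{i}}\varphi|$ produced by (\ref{lip omega}), absorb the gradient contributions by Young's inequality to reach the analogue (\ref{mmm2}) of (\ref{mm}), and use the embedding $W_{0}^{1,\overrightarrow{p}}(\Omega)\subset L^{p_{\infty}}(\Omega)$ together with $q_{i}\le p_{\infty}/p_{i}$ to see that the new term is the integral over $D_{t}$ of a fixed $L^{1}$ function (they record the exponent as $p_{i}q_{i}\le p_{\infty}$ where your bookkeeping gives $q_{i}p_{i}'$; both are admissible). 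Up to and including the absorption step your write-up matches the paper's, and is in fact more careful about the powers of $t$.

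The point you flag at the end as delicate is, however, a genuine gap --- and it is equally present in the paper's one-line conclusion ``(\ref{mmm2}) allows us to conclude''. In Theorem~\ref{th p>2} the ending works because the two sides of (\ref{eq1}) are respectively quadratic and linear in $\varphi$, so the powers of $t$ coming from $\partial_{x_{i}}\varphi=t^{-1}\partial_{x_{i}}(u-v)$ cancel, and (\ref{mm}) bounds $\int_{D_{t}}|\partial_{x_{i}}\varphi|^{2}$ by a $t$-independent quantity tending to $0$. Here the coercive side is of order $p_{i}>2$ in the gradient while the Lipschitz side is of order $1$, and the homogeneity no longer cancels: from your (correct) estimate $\sum_{i}\int_{D_{t}}|\partial_{x_{i}}(u-v)|^{p_{i}}\le c\,t\,F(t)$ with $F(t)\to0$, H\"older gives $\int_{D_{t}}|\partial_{x_{i}}\varphi|=\tfrac{1}{t}\int_{D_{t}}|\partial_{x_{i}}(u-v)|\le c\,(|D_{t}|/t)^{1/p_{i}'}F(t)^{1/p_{i}}$, and the factor $(|D_{t}|/t)^{1/p_{i}'}$ need not stay bounded as $t\to0$, since nothing forces $|\{0<w<t\}|=O(t)$. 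Equivalently, the paper's displayed analogue of (\ref{eq1}), $\alpha\sum_{i}\int_{D_{t}}|\partial_{x_{i}}\varphi|^{p_{i}}\le\cdots$, is missing a degenerating factor $t^{p_{i}-2}$ on the left once $\partial_{x_{i}}\varphi$ is expressed through $\partial_{x_{i}}(u-v)$. So you have reproduced everything the paper actually proves, but the final deduction $|D\setminus D_{t}|\to0$ requires an additional argument that neither your proposal nor the paper supplies.
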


\begin{proof}
We argue as in the proof of Theorem \ref{th p>2} taking into account the
following extra term in (\ref{eq1})%
\[
\overset{N}{\underset{i=1}{\sum}}\int_{D_{t}}\left(  \left\vert u\right\vert
+\left\vert v\right\vert \right)  ^{q_{i}}\left\vert \partial_{x_{i}}%
\varphi\right\vert .
\]
Using (\ref{diff a pi})\ and (\ref{lip omega}) instead of
(\ref{monotonia forte}) and (\ref{lip a}) respectively, we obtain%
\[
\alpha\overset{N}{\underset{i=1}{\sum}}\int_{D_{t}}\left\vert \partial_{x_{i}%
}\varphi\right\vert ^{p_{i}}\leq\beta\overset{N}{\underset{i=1}{\sum}}%
\int_{D_{t}}\left(  \theta+\left\vert \partial_{x_{i}}v\right\vert ^{p_{i}%
-1}\right)  \left\vert \partial_{x_{i}}\varphi\right\vert +\overset
{N}{\underset{i=1}{\sum}}\int_{D_{t}}\left(  \left\vert u\right\vert
+\left\vert v\right\vert \right)  ^{q_{i}}\left\vert \partial_{x_{i}}%
\varphi\right\vert .
\]
Using Young inequality with some $\delta>0$ and choosing $\delta$ small
enough, we obtain the analogue of (\ref{mm})%
\begin{equation}
\overset{N}{\underset{i=1}{\sum}}\int_{D_{t}}\left\vert \partial_{x_{i}%
}\varphi\right\vert ^{p_{i}}\leq c\left(  N\left\vert D_{t}\right\vert
+\overset{N}{\underset{i=1}{\sum}}\int_{D_{t}}\left\vert \partial_{x_{i}%
}v\right\vert ^{p_{i}}+\overset{N}{\underset{i=1}{\sum}}\int_{D_{t}}\left(
\left\vert u\right\vert +\left\vert v\right\vert \right)  ^{p_{i}q_{i}%
}\right)  , \label{mmm2}%
\end{equation}
for some positive constant $c$ independent on $t$.$\ $Since $0<q_{i}\leq
\frac{p_{\infty}}{p_{i}},$ (\ref{mmm2}) allows us to conclude.
\end{proof}

\bigskip

\begin{remark}
\label{remarc omega}If in Theorem \ref{th p>2 bis} we assume
(\ref{monotonia forte}) holds instead of (\ref{diff a pi}), we can take
$0<q_{i}\leq\frac{p_{\infty}}{2}.$ Moreover Theorem \ref{th p>2 bis} holds if
we replace (\ref{lip omega}) by
\[
\left\vert a_{i}\left(  x,s,\xi\right)  -a_{i}\left(  x,s^{\prime},\xi\right)
\right\vert \leq\beta\left(  \theta+\left\vert \xi_{i}\right\vert ^{p_{i}%
-1}+\left(  \left\vert s\right\vert +\left\vert s^{\prime}\right\vert \right)
^{q_{i}}\right)  \omega(\left\vert s-s^{\prime}\right\vert )
\]
\ for $i=1,..,N,$ where $\omega:\left[  0,+\infty\right[  \rightarrow\left[
0,+\infty\right[  $ is such that $\omega(s)\leq s$ for $0\leq s\leq\rho$ for
some $\rho>0.$
\end{remark}

\bigskip

Now we study Problem (\ref{P0}) when at least one $p_{i}$ is less or equal to
$2$ and $\varepsilon=0$ in (\ref{monotonia forte}). We argue as in Theorem
\ref{th p>2} by using Poincar\'{e} inequality (\ref{dis poincare}) instead of
inequality (\ref{sobolev}). The following result is obtained by a different
proof in \cite{chipot} (see Theorem 2.1).

\begin{theorem}
\label{th chipot}Let us assume (\ref{cond immersione}), (\ref{ellittic}),
(\ref{crescita}),(\ref{ff}), (\ref{g}), (\ref{monotonia forte}) with
$\varepsilon=0$ and (\ref{lip a}) with $\theta=0$. If at least one $p_{i}$ is
less or equal to $2$, then there exists a unique weak solution to Problem
(\ref{P0}).
\end{theorem}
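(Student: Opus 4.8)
The plan is to adapt the proof of Theorem~\ref{th p>2}, replacing the anisotropic Sobolev inequality (\ref{sobolev}) by the one-directional Poincar\'{e} inequality (\ref{dis poincare}). Existence follows from the Leray--Lions theorem, so only uniqueness is at stake. Let $u,v$ be two weak solutions to (\ref{P0}), put $w=\left( u-v\right) ^{+}$, $D=\left\{ x\in\Omega:w>0\right\} $, $D_{t}=\left\{ x\in D:w<t\right\} $ for $t\in\left[ 0,\sup w\right[ $, and assume $\left\vert D\right\vert >0$ in order to reach a contradiction. Fix once and for all an index $i_{0}$ with $p_{i_{0}}\leq2$, which exists by hypothesis. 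Taking $\varphi=T_{t}(w)/t\in W_{0}^{1,\overrightarrow{p}}(\Omega)$ as test function in the difference of the two weak formulations, splitting $a_{i}(x,u,\nabla u)-a_{i}(x,v,\nabla v)$ into $\left[ a_{i}(x,u,\nabla u)-a_{i}(x,u,\nabla v)\right] +\left[ a_{i}(x,u,\nabla v)-a_{i}(x,v,\nabla v)\right] $ as in Theorem~\ref{th p>2}, and using (\ref{monotonia forte}) with $\varepsilon=0$, (\ref{lip a}) with $\theta=0$ together with $w\leq t$ on $D_{t}$, one arrives at the analogue of (\ref{eq1}):
\[
\alpha\overset{N}{\underset{i=1}{\sum}}\int_{D_{t}}\bigl(\left\vert \partial_{x_{i}}u\right\vert +\left\vert \partial_{x_{i}}v\right\vert \bigr)^{p_{i}-2}\left\vert \partial_{x_{i}}\varphi\right\vert ^{2}\leq\beta\overset{N}{\underset{i=1}{\sum}}\int_{D_{t}}\left\vert \partial_{x_{i}}v\right\vert ^{p_{i}-1}\left\vert \partial_{x_{i}}\varphi\right\vert .
\]

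Next I would estimate the right-hand side by Young's inequality, writing each $\left\vert \partial_{x_{i}}v\right\vert ^{p_{i}-1}\left\vert \partial_{x_{i}}\varphi\right\vert $ as the product of $\bigl(\left\vert \partial_{x_{i}}u\right\vert +\left\vert \partial_{x_{i}}v\right\vert \bigr)^{\frac{p_{i}-2}{2}}\left\vert \partial_{x_{i}}\varphi\right\vert $ and $\left\vert \partial_{x_{i}}v\right\vert ^{p_{i}-1}\bigl(\left\vert \partial_{x_{i}}u\right\vert +\left\vert \partial_{x_{i}}v\right\vert \bigr)^{\frac{2-p_{i}}{2}}$ (legitimate both for $p_{i}\geq2$ and for $p_{i}<2$, since $\left\vert \partial_{x_{i}}v\right\vert \leq\left\vert \partial_{x_{i}}u\right\vert +\left\vert \partial_{x_{i}}v\right\vert $, and the exponents combine to give a power $\leq p_{i}$ of $\left\vert \partial_{x_{i}}u\right\vert +\left\vert \partial_{x_{i}}v\right\vert $). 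Choosing the Young parameter small enough to absorb the weighted quadratic term into the left-hand side yields, with a constant $c$ independent of $t$, the analogue of (\ref{mm}):
\[
\overset{N}{\underset{i=1}{\sum}}\int_{D_{t}}\bigl(\left\vert \partial_{x_{i}}u\right\vert +\left\vert \partial_{x_{i}}v\right\vert \bigr)^{p_{i}-2}\left\vert \partial_{x_{i}}\varphi\right\vert ^{2}\leq c\,\overset{N}{\underset{i=1}{\sum}}\int_{D_{t}}\bigl(\left\vert \partial_{x_{i}}u\right\vert ^{p_{i}}+\left\vert \partial_{x_{i}}v\right\vert ^{p_{i}}\bigr)=:M(t),
\]
and $M(t)\to0$ as $t\to0^{+}$, by absolute continuity of the integral, since $\left\vert D_{t}\right\vert \to0$.

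The new point is how to use this estimate. Since $\partial_{x_{i_{0}}}\varphi$ vanishes where $\left\vert \partial_{x_{i_{0}}}u\right\vert +\left\vert \partial_{x_{i_{0}}}v\right\vert =0$, the Cauchy--Schwarz inequality gives
\[
\int_{D_{t}}\left\vert \partial_{x_{i_{0}}}\varphi\right\vert \leq\Bigl(\int_{D_{t}}\bigl(\left\vert \partial_{x_{i_{0}}}u\right\vert +\left\vert \partial_{x_{i_{0}}}v\right\vert \bigr)^{p_{i_{0}}-2}\left\vert \partial_{x_{i_{0}}}\varphi\right\vert ^{2}\Bigr)^{\!1/2}\Bigl(\int_{D_{t}}\bigl(\left\vert \partial_{x_{i_{0}}}u\right\vert +\left\vert \partial_{x_{i_{0}}}v\right\vert \bigr)^{2-p_{i_{0}}}\Bigr)^{\!1/2}.
\]
The first factor is at most $M(t)^{1/2}$; the second tends to $0$ as $t\to0^{+}$, because $0\leq2-p_{i_{0}}\leq p_{i_{0}}$ gives the pointwise bound $\bigl(\left\vert \partial_{x_{i_{0}}}u\right\vert +\left\vert \partial_{x_{i_{0}}}v\right\vert \bigr)^{2-p_{i_{0}}}\leq1+\bigl(\left\vert \partial_{x_{i_{0}}}u\right\vert +\left\vert \partial_{x_{i_{0}}}v\right\vert \bigr)^{p_{i_{0}}}\in L^{1}(\Omega)$, while $\left\vert D_{t}\right\vert \to0$. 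Hence $\int_{D_{t}}\left\vert \partial_{x_{i_{0}}}\varphi\right\vert \to0$. Finally $\varphi=1$ on $D\setminus D_{t}$ and $\partial_{x_{i_{0}}}\varphi$ is supported in $D_{t}$, so the Poincar\'{e} inequality (\ref{dis poincare}) with $q=1$ and $i=i_{0}$ gives $\left\vert D\setminus D_{t}\right\vert \leq\left\Vert \varphi\right\Vert _{L^{1}(\Omega)}\leq C_{P}\int_{D_{t}}\left\vert \partial_{x_{i_{0}}}\varphi\right\vert \to0$, whereas $\left\vert D\setminus D_{t}\right\vert \to\left\vert D\right\vert $ as $t\to0^{+}$. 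Therefore $\left\vert D\right\vert =0$, i.e. $u\leq v$ a.e. in $\Omega$; exchanging the roles of $u$ and $v$ yields $u=v$.

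The main obstacle---and the reason Theorem~\ref{th p>2} cannot simply be quoted---is that when $\varepsilon=0$ the weight $\bigl(\left\vert \partial_{x_{i}}u\right\vert +\left\vert \partial_{x_{i}}v\right\vert \bigr)^{p_{i}-2}$ may degenerate, so $\int_{D_{t}}\left\vert \partial_{x_{i}}\varphi\right\vert ^{2}$ is no longer controlled by $M(t)$ in the directions with $p_{i}>2$. This is exactly why one needs a direction $i_{0}$ with $p_{i_{0}}\leq2$: only then is the complementary weight $\bigl(\left\vert \partial_{x_{i_{0}}}u\right\vert +\left\vert \partial_{x_{i_{0}}}v\right\vert \bigr)^{2-p_{i_{0}}}$ a power $\leq1$, hence integrable, so that Cauchy--Schwarz produces a bound for $\int_{D_{t}}\left\vert \partial_{x_{i_{0}}}\varphi\right\vert $ which can be fed into the single-direction inequality (\ref{dis poincare}) in place of the full Sobolev inequality (\ref{sobolev}) used in Theorem~\ref{th p>2}. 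I expect the only delicate verifications to be the admissibility of the Young splitting uniformly in the sign of $p_{i}-2$ and the $L^{1}(\Omega)$-integrability of the complementary weight, both of which are routine.
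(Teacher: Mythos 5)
Your argument is correct and is essentially the paper's own proof: both derive the weighted estimate $\sum_i\int_{D_t}(|\partial_{x_i}u|+|\partial_{x_i}v|)^{p_i-2}|\partial_{x_i}\varphi|^2\leq M(t)$ via the strong monotonicity, the Lipschitz condition and Young's inequality, and both then apply the one-directional Poincar\'e inequality (\ref{dis poincare}) with $q=1$ in a direction with $p_{i_0}\leq2$, splitting $\int_{D_t}|\partial_{x_{i_0}}\varphi|$ against the weight so that the complementary factor $\int_{D_t}(|\partial_{x_{i_0}}u|+|\partial_{x_{i_0}}v|)^{2-p_{i_0}}$ is controlled by $|D_t|+\int_{D_t}(\cdot)^{p_{i_0}}\to0$. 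The only cosmetic difference is that you use Cauchy--Schwarz in product form where the paper uses Young's inequality in sum form for this last step.
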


\begin{proof}
Arguing as in the proof of Theorem \ref{th p>2} we have%

\begin{equation}
\overset{N}{\alpha\underset{i=1}{\sum}}\int_{D_{t}}\left\vert \partial_{x_{i}%
}\varphi\right\vert ^{2}\left(  \left\vert \partial_{x_{i}}u\right\vert
+\left\vert \partial_{x_{i}}v\right\vert \right)  ^{p_{i}-2}\leq\beta
\overset{N}{\underset{i=1}{\sum}}\int_{D_{t}}\left\vert \partial_{x_{i}%
}v\right\vert ^{p_{i}-1}\left\vert \partial_{x_{i}}\varphi\right\vert .
\label{eq2}%
\end{equation}
By Young inequality with some $\delta>0$ we get%
\begin{equation}
\!\int_{\!D_{t}}\!\!\!\left\vert \partial_{x_{i}}v\right\vert ^{p_{i}%
-1}\!\!\left\vert \partial_{x_{i}}\varphi\right\vert \!\leq\!\!\frac{\delta
}{2}\!\int_{\!D_{t}}\!\!\left\vert \partial_{x_{i}}\varphi\right\vert
^{2}\left(  \!\left\vert \partial_{x_{i}}u\right\vert +\left\vert
\partial_{x_{i}}v\right\vert \!\right)  ^{p_{i}-2}\!+\!\frac{1}{4\delta}%
\!\int_{\!D_{t}}\!\!\left(  \!\left\vert \partial_{x_{i}}v\right\vert
+\left\vert \partial_{x_{i}}v\right\vert \!\right)  ^{p_{i}}\!.\!
\label{intermedia}%
\end{equation}
Putting (\ref{intermedia}) in (\ref{eq2}) and choosing $\delta$ small enough
we obtain
\begin{equation}
\overset{N}{\underset{i=1}{\sum}}\int_{D_{t}}\left\vert \partial_{x_{i}%
}\varphi\right\vert ^{2}\left(  \!\left\vert \partial_{x_{i}}u\right\vert
+\left\vert \partial_{x_{i}}v\right\vert \!\right)  ^{p_{i}-2}\leq
c_{1}\overset{N}{\underset{i=1}{\sum}}\int_{D_{t}}\left(  \left\vert
\partial_{x_{i}}v\right\vert +\left\vert \partial_{x_{i}}v\right\vert \right)
^{p_{i}} \label{eq3333}%
\end{equation}
for some positive constant $c_{1}$ independent on $t.$ Let $p_{j}\leq2.$ Using
Poincar\'{e} inequality (\ref{dis poincare}), Young inequality and
(\ref{eq3333}) we get
\begin{align*}
C_{P}\left\vert D\backslash D_{t}\right\vert  &  \leq\frac{c_{2}}{2}\left[
\int_{D_{t}}\frac{\left\vert \partial_{x_{j}}\varphi\right\vert ^{2}}{\left(
\left\vert \partial_{x_{j}}u\right\vert +\left\vert \partial_{x_{j}%
}v\right\vert \right)  ^{2-p_{j}}}+\int_{D_{t}}\left(  \left\vert
\partial_{x_{j}}v\right\vert +\left\vert \partial_{x_{j}}v\right\vert \right)
^{2-p_{j}}\right] \\
&  \leq\frac{c_{1}c_{2}}{2}\overset{N}{\underset{i=1}{\sum}}\int_{D_{t}%
}\left(  \left\vert \partial_{x_{i}}u\right\vert +\left\vert \partial_{x_{i}%
}v\right\vert \right)  ^{p_{i}}+\frac{c_{2}}{2}\int_{D_{t}}\left(  \left\vert
\partial_{x_{j}}u\right\vert +\left\vert \partial_{x_{j}}v\right\vert \right)
^{2-p_{j}}%
\end{align*}
for some positive constant $c_{2}$ independent on $t$. As in Theorem
\ref{th p>2}, condition (\ref{lim D}) follows, then the assert holds.
\end{proof}

\bigskip

\begin{remark}
\label{remark b}Theorem \ref{th chipot} also holds if we replace (\ref{lip a})
by%
\[
\left\vert a_{i}\left(  x,s,\xi\right)  -a_{i}\left(  x,s^{\prime},\xi\right)
\right\vert \leq\beta\left(  \left\vert \xi_{i}\right\vert ^{p_{i}-1}\right)
\omega(\left\vert s-s^{\prime}\right\vert )\text{ \ \ \ }\beta>0
\]
\ for $i=1,..,N,$ where is like in Remark \ref{remarc omega}. Moreover if we
suppose that every $p_{i}<2$ Theorem \ref{th chipot} holds with $\theta=0$ in
(\ref{lip a}). Finally we stress that Theorems \ref{th p>2}, \ref{th p>2 bis}
and \ref{th chipot} hold if in Problem (\ref{P0}) we add the term $c(x,u)$
with suitable hypotheses (for example $c$ is an increasing function with
respect to $u).$
\end{remark}

\section{Operators with a first order term}

In this section we consider Problem (\ref{Problema}) when the functions
$a_{i}$\ do not depend on $u.$ More precisely under the assumptions
(\ref{ellittic})-(\ref{g}) we consider the following class of nonlinear
anisotropic homogeneous Dirichlet problems:%

\begin{equation}
\left\{
\begin{array}
[c]{lll}%
-\partial_{x_{i}}a_{i}(x,\nabla u)+\overset{N}{\underset{i=1}{\sum}}%
H_{i}(x,\nabla u)=f-\partial_{x_{i}}g & \text{in} & \Omega\\
u=0 & \text{on} & \partial\Omega.
\end{array}
\right.  \label{P1}%
\end{equation}

\subsection{An existence result for Problem (\ref{P1}).}

In this section we prove the existence of at least a weak solution to Problem
(\ref{P1}). To our knowledge this result could not be found in literature.

\noindent The coercivity of the operator is guaranteed only if the norms of
$b_{i}$ are small enough. As usual we consider the approximate problems. Let
$H_{n}^{i}(x,\nabla u)$ be the truncation at levels $\pm n$ of $H_{i}$. It is
well known (see e.g. \cite{lions}) that there exists a weak solution $u_{n}\in
W_{0}^{1,\overrightarrow{p}}(\Omega)$ to problem%
\begin{equation}
\left\{
\begin{array}
[c]{ccc}%
-\partial_{x_{i}}a_{i}(x,\nabla u_{n})+\overset{N}{\underset{i=1}{\sum}}%
H_{n}^{i}(x,\nabla u_{n})=f-\partial_{x_{i}}g_{i} & \text{in} & \Omega\\
u=0 & \text{on} & \partial\Omega.
\end{array}
\right.  \label{approssimato}%
\end{equation}
The first and crucial step is an a priori estimate of $u_{n}.$ For the
convenience of the reader we are writing explicitly the sum sign.

\begin{lemma}
\label{lemma stim} Assume that (\ref{cond immersione}), (\ref{ellittic}%
)-(\ref{crescita H}), (\ref{ff}) and (\ref{g}) hold and let $u_{n}\in
W_{0}^{1,\overrightarrow{p}}(\Omega)$ be a solution to Problem
(\ref{approssimato}). Then we have
\begin{equation}
\overset{N}{\underset{i=1}{\sum}}\int_{\Omega}\left\vert \partial_{x_{i}}%
u_{n}\right\vert ^{p_{i}}\leq C, \label{stima 0}%
\end{equation}
for some positive constant $C$ depending on $N,\Omega,\lambda,\gamma
,p_{i},b_{i},\left\Vert f\right\Vert _{L^{p_{\infty}}(\Omega)},\left\Vert
g_{i}\right\Vert _{L^{p_{i}^{\prime}}(\Omega)}$ for $i=1,..,N.$
\end{lemma}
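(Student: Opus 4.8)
\textbf{Proof strategy for Lemma \ref{lemma stim}.}

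The plan is to test the approximate equation \eqref{approssimato} with the solution $u_n$ itself and exploit coercivity \eqref{ellittic}, controlling the first-order term by a splitting-of-the-domain argument à la \cite{bottaro}. Taking $\varphi = u_n$, the left-hand side produces $\sum_i \int_\Omega a_i(x,\nabla u_n)\,\partial_{x_i}u_n$, which by \eqref{ellittic} is bounded below by $\lambda \sum_i \int_\Omega |\partial_{x_i}u_n|^{p_i}$. The data terms on the right are handled routinely: $\int_\Omega f u_n$ is estimated by Hölder and the Sobolev embedding \eqref{sobolev} (using $f \in L^{p_\infty'}$ and $u_n \in L^{p_\infty}$), while $\sum_i \int_\Omega g_i \partial_{x_i}u_n$ is estimated by Hölder in the $(p_i',p_i)$ duality; in both cases a small parameter in Young's inequality lets us absorb $\varepsilon_0\sum_i\int|\partial_{x_i}u_n|^{p_i}$ into the left-hand side. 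Since $H_n^i$ is a truncation of $H_i$, we still have $|H_n^i(x,\nabla u_n)| \le b_i |\partial_{x_i}u_n|^{p_i-1}$, so the genuinely problematic term is
\[
\sum_{i=1}^N \int_\Omega \left| H_n^i(x,\nabla u_n) \right| |u_n| \le \sum_{i=1}^N b_i \int_\Omega |\partial_{x_i}u_n|^{p_i-1} |u_n|.
\]

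The main obstacle is precisely this first-order term: a naive Young inequality on $|\partial_{x_i}u_n|^{p_i-1}|u_n|$ yields a constant times $\int_\Omega |\partial_{x_i}u_n|^{p_i}$ that cannot be absorbed unless $b_i$ is small, which is exactly the hypothesis we wish to avoid. The fix is to split $\Omega$ into finitely many measurable pieces $\Omega_1,\dots,\Omega_t$ of small diameter (equivalently, small measure) such that, by the local Poincaré inequality \eqref{dis poincare} applied on each piece, $\|u_n\|_{L^{p_i}(\Omega_\sigma)} \le C_P(\Omega_\sigma) \|\partial_{x_i}u_n\|_{L^{p_i}(\Omega_\sigma)}$ with $C_P(\Omega_\sigma)$ as small as we like. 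On each $\Omega_\sigma$, Hölder with exponents $(p_i/(p_i-1), p_i)$ gives $\int_{\Omega_\sigma}|\partial_{x_i}u_n|^{p_i-1}|u_n| \le \|\partial_{x_i}u_n\|_{L^{p_i}(\Omega_\sigma)}^{p_i-1}\|u_n\|_{L^{p_i}(\Omega_\sigma)} \le C_P(\Omega_\sigma)\|\partial_{x_i}u_n\|_{L^{p_i}(\Omega_\sigma)}^{p_i}$, and summing over $\sigma$ produces $\Big(\max_\sigma C_P(\Omega_\sigma)\Big)\sum_i b_i\int_\Omega|\partial_{x_i}u_n|^{p_i}$; choosing the partition fine enough makes this coefficient smaller than $\lambda/2$, so it is absorbed. (One must check that a finite partition with arbitrarily small Poincaré constants exists: this follows because $C_P$ on a cube scales with its sidelength, so covering $\Omega$ by finitely many small cubes — intersected with $\Omega$ — does the job, the number $t$ depending only on $\Omega$ and the desired smallness.)

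Putting the pieces together: after absorbing the first-order term and the data terms, we are left with $\tfrac{\lambda}{2}\sum_i\int_\Omega|\partial_{x_i}u_n|^{p_i} \le C$, where $C$ depends only on $N,\Omega,\lambda,\gamma,p_i,b_i$ and the norms $\|f\|_{L^{p_\infty'}(\Omega)}$, $\|g_i\|_{L^{p_i'}(\Omega)}$, since the partition — and hence $t$ and the Poincaré constants used — depends only on $\Omega$, $\lambda$ and the $b_i$. This is exactly \eqref{stima 0}. The estimate is uniform in $n$ because every constant produced is independent of the truncation level, and the crucial splitting step never sees $n$ at all. I would write the argument so that the dependence of the partition on $(\Omega,\lambda,b_i)$ is made explicit up front, then carry out the single chain of inequalities $\lambda\sum_i\int|\partial_{x_i}u_n|^{p_i} \le \sum_i\int|H_n^i||u_n| + \int|f||u_n| + \sum_i\int|g_i||\partial_{x_i}u_n|$ and absorb term by term.
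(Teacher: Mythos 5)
Your overall scheme (test with $u_n$ itself, use \eqref{ellittic} for coercivity, absorb the data terms by H\"older--Young, and gain smallness on the first-order term by a splitting argument) is the right one, and your treatment of the $f$ and $g_i$ terms is fine. The gap is in the splitting step. The inequality you invoke on each piece, $\|u_n\|_{L^{p_i}(\Omega_\sigma)}\le C_P(\Omega_\sigma)\|\partial_{x_i}u_n\|_{L^{p_i}(\Omega_\sigma)}$ with $C_P(\Omega_\sigma)$ as small as you like, is false: \eqref{dis poincare} is a Poincar\'e inequality for functions vanishing on the boundary of the domain to which it is applied, and the restriction of $u_n$ to a small cube $Q\cap\Omega$ in the interior of $\Omega$ does not vanish on $\partial Q$. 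A function equal to a large nonzero constant on $Q$ has zero gradient there, so no such local inequality can hold; replacing it by a Poincar\'e--Wirtinger inequality leaves an uncontrolled mean-value term. Consequently a geometric partition of $\Omega$ into small subdomains cannot produce the smallness you need, and your argument as written only closes under a smallness assumption on the $b_i$ --- exactly what the lemma is meant to avoid.

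The device of \cite{bottaro}, which the paper uses, splits the \emph{solution} rather than the domain: one writes $u_n=u_1+\cdots+u_t$, where each $u_s$ is a difference of truncations of $u_n$ at successive levels chosen so that $u_s\in W_0^{1,\overrightarrow{p}}(\Omega)$ (hence \eqref{sobolev 2} and Proposition \ref{propositione 1} apply to $u_s$ on all of $\Omega$) and $\nabla u_s$ is supported in a set $\Omega_s$ of measure at most $A$; since the $\Omega_s$ are pairwise disjoint, $t\le |\Omega|/A+1$ uniformly in $n$. Testing with $u_s$, the dangerous term becomes $\sum_i b_i\sum_{\sigma\le s}\int_{\Omega_\sigma}|\partial_{x_i}u_\sigma|^{p_i-1}|u_s|$, and H\"older with the three exponents $p_i/(p_i-1)$, $p_\infty$ and $r_i$ (where $1/r_i=1/p_i-1/p_\infty$) produces the factor $|\Omega_\sigma|^{1/p_i-1/p_\infty}\le A^{1/p_i-1/p_\infty}$: the smallness comes from the measure of the support of $\nabla u_\sigma$, not from a Poincar\'e constant. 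One then absorbs the $\sigma=s$ contribution for $A$ small and iterates over $s=1,\dots,t$, the previously estimated pieces entering as known quantities. You need this (or an equivalent) mechanism; as it stands, the key step of your proof fails.
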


\begin{proof}
In what follows we do not explicitly write the dependence on $n$. The
technique developed in \cite{bottaro} allows us to avoid the assumption on
smallness of $\left\Vert b_{i}\right\Vert _{L^{\infty}(\Omega)}$. Let $A$ be a
positive real number, that will be chosen later$.$ Then there exists $t$
measurable subsets $\Omega_{1},...,\Omega_{t}$ of $\Omega$ and $t$ functions
$u_{1},...,u_{t}$ such that $\Omega_{i} \cap\Omega_{j} = \emptyset$ for $i\neq
j$,$\left\vert \Omega_{t}\right\vert \leq A$ and $\left\vert \Omega
_{s}\right\vert =A$ for $s\in\left\{  1,..,t-1\right\}  ,\left\{  x\in
\Omega:\left\vert \nabla u_{s}\right\vert \neq0\right\}  \subset\Omega
_{s},\nabla u=\nabla u_{s}$ \ a.e. in $\Omega_{s},\nabla\left(  u_{1}%
+...+u_{s}\right)  u_{s}=\left(  \nabla u\right)  u_{s},u_{1}+...+u_{s}=u$ in
$\Omega$ and\ sign$(u)=$sign$(u_{s})$ \ if $u_{s}\neq0$\ \ for $s\in\left\{
1,..,t\right\}  .$

\noindent Let us fix $s\in\left\{  1,..,t\right\}  $ and let us use $u_{s}$ as
test function in Problem (\ref{approssimato}). Using (\ref{ellittic}), Young
and H\"{o}lder inequalities and Proposition \ref{propositione 1} we obtain%
\begin{equation}
\!\underset{i=1}{\overset{N}{%
{\displaystyle\sum}
}}\!\int_{\!\Omega}\!\left\vert \partial_{x_{i}}u_{s}\right\vert ^{p_{i}%
}\!\!\leq\!\!c_{1}\!\left(  \!\left\Vert f\right\Vert _{L^{p_{\infty}^{\prime
}}}\!d_{s}^{\frac{1}{N}}\!+\!\underset{i=1}{\overset{N}{%
{\displaystyle\sum}
}}\!\int_{\!\Omega}\!\left\vert H^{i}(x,\nabla u)\right\vert \left\vert
u_{s}\right\vert \!+\!\underset{i=1}{\overset{N}{%
{\displaystyle\sum}
}}\!\left\Vert g_{i}\right\Vert _{L^{p_{i}^{\prime}}}^{p_{i}^{\prime}%
}\!\!\right)  \! \label{equazione 1}%
\end{equation}
for some constant $c_{1}>0,$ where $d_{s}=\underset{i}{%
{\displaystyle\prod}
}\left(  \int_{\Omega}\left\vert \partial_{x_{i}}u_{s}\right\vert ^{p_{i}%
}\right)  ^{\frac{1}{p_{i}}}.$ Here and in what follows the constants depend
on the data but not on the function $u.$

\noindent Using condition (\ref{crescita H}),\ H\"{o}lder and Young
inequalities and Proposition \ref{propositione 1} we get
\begin{align}
\left\vert \underset{i=1}{\overset{N}{%
{\displaystyle\sum}
}}\int_{\Omega}H^{i}(x,\nabla u)u_{s}\right\vert  &  \leq\underset
{i=1}{\overset{N}{%
{\displaystyle\sum}
}}b_{i}\int_{\Omega}\left\vert \partial_{x_{i}}u\right\vert ^{p_{i}%
-1}\left\vert u_{s}\right\vert \label{b2}\\
\!\!  &  \leq\underset{i=1}{\overset{N}{%
{\displaystyle\sum}
}}b_{i}\underset{\sigma=1}{\overset{s}{%
{\displaystyle\sum}
}}\int_{\Omega_{\sigma}}\left\vert \partial_{x_{i}}u_{\sigma}\right\vert
^{p_{i}-1}\left\vert u_{s}\right\vert \nonumber\\
\!\!  &  \leq\!c_{2}\!\underset{i=1}{\overset{N}{%
{\displaystyle\sum}
}}\!\underset{\sigma=1}{\overset{s}{%
{\displaystyle\sum}
}}\!\left[  \!\int_{\!\Omega_{\sigma}}\!\left\vert \partial_{x_{i}}u_{\sigma
}\right\vert ^{p_{i}}\left\vert \Omega_{\sigma}\right\vert ^{\frac{1}{p_{i}%
}-\frac{1}{p_{\infty}}}\!+\!d_{s}^{\frac{p_{i}}{N}}\!\left\vert \Omega
_{\sigma}\right\vert ^{\frac{1}{p_{i}}-\frac{1}{p_{\infty}}}\!\right]
\!\nonumber\\
\!\!  &  \leq\!c_{2}\!\overset{N}{\underset{i=1}{\sum}}\!A^{\frac{1}{p_{i}%
}-\frac{1}{p_{\infty}}}\!\left[  \!\int_{\!\Omega_{s}}\!\left\vert
\partial_{x_{i}}u_{s}\right\vert ^{pi}\!+\!\underset{\sigma=1}{\overset{s-1}{%
{\displaystyle\sum}
}}\!\int_{\!\Omega_{\sigma}}\!\left\vert \partial_{x_{i}}u_{\sigma}\right\vert
^{p_{i}}\!+\!d_{s}^{\frac{p_{i}}{N}}\!\right]  \!\nonumber
\end{align}

\noindent for some constant $c_{2}>0.$ Putting (\ref{b2}) in
(\ref{equazione 1}) we obtain
\begin{align}
&  \underset{i=1}{\overset{N}{%
{\displaystyle\sum}
}}\int_{\Omega}\left\vert \partial_{x_{i}}u_{s}\right\vert ^{p_{i}}\leq
c_{1}\left\{  \left\Vert f\right\Vert _{L^{p_{\infty}^{\prime}}}d_{s}%
^{\frac{1}{N}}+\underset{i=1}{\overset{N}{%
{\displaystyle\sum}
}}\left\Vert g_{i}\right\Vert _{L^{p_{i}^{\prime}}}^{p_{i}^{\prime}}+\right.
\label{eqqqqqqqqq}\\
&  \left.  c_{2}\overset{N}{\underset{i=1}{\sum}}A^{\frac{1}{p_{i}}-\frac
{1}{p_{\infty}}}\left[  \int_{\Omega_{s}}\left\vert \partial_{x_{i}}%
u_{s}\right\vert ^{pi}+\underset{\sigma=1}{\overset{s-1}{%
{\displaystyle\sum}
}}\int_{\Omega_{\sigma}}\left\vert \partial_{x_{i}}u_{\sigma}\right\vert
^{p_{i}}+\overset{N}{\underset{i=1}{\sum}}d_{s}^{\frac{p_{i}}{N}}\right]
\right\}  .\nonumber
\end{align}
If $A$ is such that
\begin{equation}
1-c_{1}c_{2}\overset{N}{\underset{i=1}{\sum}}A^{\frac{1}{p_{i}}-\frac
{1}{p_{\infty}}}>0, \label{cond 1}%
\end{equation}
inequality (\ref{eqqqqqqqqq}) becomes
\begin{align}
&  \underset{i=1}{\overset{N}{%
{\displaystyle\sum}
}}\int_{\Omega}\left\vert \partial_{x_{i}}u_{s}\right\vert ^{p_{i}}\leq
c_{3}\left\{  \left\Vert f\right\Vert _{L^{p_{\infty}^{\prime}}}d_{s}%
^{\frac{1}{N}}+\underset{i=1}{\overset{N}{%
{\displaystyle\sum}
}}\left\Vert g_{i}\right\Vert _{L^{p_{i}^{\prime}}}^{p_{i}^{\prime}}+\right.
\label{equazione s 2}\\
&  \left.  \underset{\sigma=1}{\overset{s-1}{%
{\displaystyle\sum}
}}\underset{i=1}{\overset{N}{%
{\displaystyle\sum}
}}A^{\frac{1}{p_{i}}-\frac{1}{p_{\infty}}}\left(  \overset{N}{\underset
{j=1}{\sum}}\int_{\Omega_{\sigma}}\left\vert \partial_{x_{j}}u_{\sigma
}\right\vert ^{p_{j}}\right)  +\overset{N}{\underset{i=1}{\sum}}A^{\frac
{1}{p_{i}}-\frac{1}{p_{\infty}}}d_{s}^{\frac{p_{i}}{N}}\right\} \nonumber
\end{align}
for some constant $c_{3}>0.$ For $s=1$ we get \
\begin{equation}
\!\!\!\!\int_{\!\Omega}\!\left\vert \partial_{x_{i}}u_{1}\right\vert ^{p_{i}%
}\!\leq\!\underset{i=1}{\overset{N}{%
{\displaystyle\sum}
}}\!\int_{\!\Omega}\!\left\vert \partial_{x_{i}}u_{1}\right\vert ^{p_{i}%
}\!\leq\!\!c_{3}\!\left[  \!\left\Vert f\right\Vert _{L^{p_{\infty}^{\prime}}%
}\!\!d_{1}^{\frac{1}{N}}\!+\!\underset{i=1}{\overset{N}{%
{\displaystyle\sum}
}}\!\left\Vert g_{i}\right\Vert _{L^{p_{i}^{\prime}}}^{p_{i}^{\prime}%
}\!\!+\!\overset{N}{\underset{i=1}{\sum}}A^{\frac{1}{p_{i}}-\frac{1}%
{p_{\infty}}}\!d_{1}^{\frac{p_{i}}{N}}\!\!\right]  \!.\! \label{s01 bis}%
\end{equation}
Let us choose $A$ such that (\ref{cond 1}) and%
\[
1-c_{3}\overset{N}{\underset{i=1}{\sum}}A^{\frac{N}{p_{i}}\left(  \frac
{1}{p_{i}}-\frac{1}{p_{\infty}}\right)  }>0
\]
hold. For example we can take $A<\min\left\{  1,\left(  \frac{1}{2c_{3}%
}\right)  ^{\frac{1}{\underset{i=1,..N}{\min}\left\{  \frac{1}{p_{i}}-\frac
{1}{p_{\infty}}\right\}  }},\left(  \frac{1}{2c_{3}}\right)  ^{\frac
{1}{\underset{i=1,..N}{\min}\left\{  \frac{N}{p_{i}}\left(  \frac{1}{p_{i}%
}-\frac{1}{p_{\infty}}\right)  \right\}  }}\right\}  .$ By this choice we
obtain
\[
d_{1}=\underset{i}{%
{\displaystyle\prod}
}\left(  \int_{\Omega}\left\vert \partial_{x_{i}}u_{1}\right\vert ^{p_{i}%
}\right)  ^{\frac{1}{p_{i}}}\leq c_{4}\left[  \left(  \left\Vert f\right\Vert
_{L^{p_{\infty}^{\prime}}}^{\frac{N}{\overline{p}}}+\left\Vert \nu
_{2}\right\Vert _{L^{p_{\infty}^{\prime}}}^{\frac{N}{\overline{p}}}\right)
d_{1}^{\frac{1}{\overline{p}}}+\underset{i=1}{\overset{N}{%
{\displaystyle\sum}
}}\left\Vert g_{i}\right\Vert _{L^{p_{i}^{\prime}}}^{p_{i}^{\prime}}\right]
.
\]
Then there exists a constant $c_{5}>0$ such that $d_{1}\leq c_{5}$ and by
(\ref{s01 bis}) we obtain
\begin{equation}
\underset{i=1}{\overset{N}{%
{\displaystyle\sum}
}}\int_{\Omega}\left\vert \partial_{x_{i}}u_{1}\right\vert ^{p_{i}}\leq c_{6}
\label{C1}%
\end{equation}
for some constant $c_{6}>0.$ Moreover using (\ref{C1}) in (\ref{equazione s 2}%
) and iterating on $s$ we have
\[
\underset{i=1}{\overset{N}{%
{\displaystyle\sum}
}}\int_{\Omega}\left\vert \partial_{x_{i}}u_{s}\right\vert ^{p_{i}}\leq
c_{7}\left[  \left\Vert f\right\Vert _{L^{p_{\infty}^{\prime}}}d_{s}^{\frac
{1}{N}}+\underset{i=1}{\overset{N}{%
{\displaystyle\sum}
}}\left\Vert g_{i}\right\Vert _{L^{p_{i}^{\prime}}}^{p_{i}^{\prime}%
}+1+\overset{N}{\underset{i=1}{\sum}}A^{\frac{1}{p_{i}}-\frac{1}{p_{\infty}}%
}d_{1}^{\frac{p_{i}}{N}}\right]  ,
\]
then arguing as before we obtain
\begin{equation}
\underset{i=1}{\overset{N}{%
{\displaystyle\sum}
}}\int_{\Omega}\left\vert \partial_{x_{i}}u_{s}\right\vert ^{p_{i}}\leq c_{8}
\label{Cs}%
\end{equation}
for some constant $c_{8}>0.$ The assertion follows immediately since
$\left\Vert u\right\Vert _{W_{0}^{1,\overrightarrow{p}}(\Omega)}\leq
k\underset{i=1}{\overset{N}{%
{\displaystyle\sum}
}}$ $\left(  \underset{s=1}{\overset{t}{%
{\displaystyle\sum}
}}%
{\displaystyle\int_{\Omega}}
\left\vert \partial_{x_{i}}u_{s}\right\vert ^{p_{i}}\right)  ^{\frac{1}{p_{i}%
}}$ for some positive $k>0$.
\end{proof}

\bigskip

Now we are able to prove the following existence result.

\begin{theorem}
Assume that (\ref{cond immersione}), (\ref{ellittic})-(\ref{crescita H}),
(\ref{ff}) and (\ref{g}) hold, then there exists at least a weak solution to
Problem (\ref{P1}).
\end{theorem}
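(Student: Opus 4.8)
The plan is to pass to the limit in the approximate problems \eqref{approssimato}, using the a priori estimate of Lemma \ref{lemma stim} as the engine. First I would invoke Lemma \ref{lemma stim}: the bound $\sum_i \int_\Omega |\partial_{x_i} u_n|^{p_i} \le C$ with $C$ independent of $n$ shows that $(u_n)$ is bounded in $W_0^{1,\overrightarrow{p}}(\Omega)$. By reflexivity of $W_0^{1,\overrightarrow{p}}(\Omega)$ and the compact embedding part of Proposition \ref{propositione 1} (valid since \eqref{cond immersione} holds), up to a subsequence we get $u_n \rightharpoonup u$ weakly in $W_0^{1,\overrightarrow{p}}(\Omega)$, $u_n \to u$ strongly in $L^q(\Omega)$ for every $q < p_\infty$, and $u_n \to u$ a.e. in $\Omega$. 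The growth condition \eqref{crescita} (with $a_i$ not depending on $u$, so only the $|\xi_i|^{p_i-1}$ term survives) shows $(a_i(x,\nabla u_n))$ is bounded in $L^{p_i'}(\Omega)$, hence converges weakly (up to a subsequence) to some $A_i \in L^{p_i'}(\Omega)$.

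The main obstacle is the usual one for nonlinear operators: identifying the weak limit $A_i$ with $a_i(x,\nabla u)$, which requires almost-everywhere convergence of the gradients. For this I would use a Minty–Browder / Leray–Lions type argument adapted to the anisotropic setting. The key estimate is to test the equation for $u_n$ with $u_n - u$ (or with $T_k(u_n - u)$ if an extra truncation is needed to control the lower order term) and show that
\[
\sum_{i=1}^N \int_\Omega \bigl(a_i(x,\nabla u_n) - a_i(x,\nabla u)\bigr)\bigl(\partial_{x_i} u_n - \partial_{x_i} u\bigr) \longrightarrow 0 .
\]
The right-hand side terms behave well: $\int f(u_n - u) \to 0$ since $f \in L^{p_\infty'}$ and $u_n \to u$ in $L^{p_\infty - \eta}$ together with the uniform bound in $L^{p_\infty}$ (interpolation, or just weak $L^{p_\infty}$ convergence paired with $f \in L^{p_\infty'}$); $\sum_i \int g_i \partial_{x_i}(u_n - u) \to 0$ by weak convergence of $\partial_{x_i} u_n$ in $L^{p_i}$; the term $\sum_i \int a_i(x,\nabla u)\,\partial_{x_i}(u_n - u) \to 0$ by weak convergence again; and $\sum_i \int H_n^i(x,\nabla u_n)(u_n - u)$ is handled by splitting off $H_n^i - H^i$ (which is controlled because $H_n^i \to H^i$ with a uniform $L^{r}$-type bound coming from \eqref{crescita H} and the gradient bound) and estimating $\int |H^i(x,\nabla u_n)||u_n - u|$ via Hölder, using \eqref{crescita H}, the bound on $\partial_{x_i} u_n$ in $L^{p_i}$, and the strong convergence $u_n \to u$ in $L^{p_\infty}$ (note $p_\infty > p_i$ strictly is not guaranteed, so one may instead use $u_n \to u$ in $L^{q}$ for a suitable $q$ and Vitali/equi-integrability). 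One must be a little careful here since the domain-splitting trick of Lemma \ref{lemma stim} does not directly give coercivity; but for the limit passage only the a priori bound is needed, plus a standard localization if the lower-order term resists direct estimation.

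Once $\sum_i \int (a_i(x,\nabla u_n) - a_i(x,\nabla u))(\partial_{x_i} u_n - \partial_{x_i} u) \to 0$, each summand is nonnegative by the monotonicity \eqref{monotonia}, hence each converges to $0$; by a classical lemma (see \cite{lions}) this forces $\partial_{x_i} u_n \to \partial_{x_i} u$ a.e. in $\Omega$ (coordinate by coordinate), and then the Carath\'eodory property gives $a_i(x,\nabla u_n) \to a_i(x,\nabla u)$ a.e., so $A_i = a_i(x,\nabla u)$ by uniqueness of weak limits. Similarly $H_n^i(x,\nabla u_n) \to H_i(x,\nabla u)$ a.e., and equi-integrability from \eqref{crescita H} and the gradient bound yields strong $L^1$ convergence (Vitali). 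Passing to the limit in the weak formulation of \eqref{approssimato} against an arbitrary $\varphi \in W_0^{1,\overrightarrow{p}}(\Omega)$ — using weak convergence of $a_i(x,\nabla u_n)$ in $L^{p_i'}$ against $\varphi_{x_i} \in L^{p_i}$ and strong $L^1$ convergence of $H_n^i(x,\nabla u_n)$ against $\varphi \in L^\infty$ (by density it suffices to check $\varphi \in C_0^\infty$) — shows that $u$ is a weak solution to Problem \eqref{P1}, which completes the proof.
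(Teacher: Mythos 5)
Your proposal is correct and follows essentially the same route as the paper: the a priori estimate of Lemma \ref{lemma stim}, weak convergence and compactness, almost everywhere convergence of the gradients via a monotonicity argument (the paper delegates this step to the reference \cite{boccardo-murat}, whose content is precisely the Minty--Browder-type computation you sketch), and then equi-integrability plus Vitali's theorem to pass to the limit in the approximate problems. The only cosmetic difference is that the paper upgrades $a_i(x,\nabla u_n)$ and $H^i_n(x,\nabla u_n)$ to strong $L^{q_i}$ convergence for every $q_i<p_i'$, whereas you combine weak $L^{p_i'}$ convergence of $a_i$ with strong $L^1$ convergence of the lower order term; both suffice.
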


\begin{proof}
We give only a sketch of the proof, because it is standard. By (\ref{stima 0})
the sequence $\partial_{x_{i}}u_{n}$ is bounded in $L^{p_{i}}(\Omega)$ so we
have that
\begin{equation}
\partial_{x_{i}}u_{n}\rightharpoonup\partial_{x_{i}}u\text{ weakly in
}L^{p_{i}}(\Omega)\text{ \ for }i=1,...,N, \label{con debole}%
\end{equation}%
\begin{equation}
u_{n}\rightarrow u\text{ strongly in }L^{p_{-}}(\Omega)\text{ with }p_{-}%
=\min\left\{  p_{1},..,p_{N}\right\}  \label{con foret}%
\end{equation}
for some $u$ and for some subsequence, which we still denote by $u_{n}.$ We
can argue as in \cite{boccardo-murat} to prove
\begin{equation}
\partial_{x_{i}}u_{n}\rightarrow\partial_{x_{i}}u\text{ a.e. in }\Omega\text{
\ for }i=1,...,N. \label{a.e gradienti}%
\end{equation}
Using convergence (\ref{a.e gradienti}) we have for $i=1,...,N$%
\[
\left\{
\begin{array}
[c]{c}%
a_{i}(x,\nabla u_{n})\rightarrow a_{i}(x,\nabla u)\text{ a.e. in }\Omega,\\
H_{n}^{i}(x,\nabla u_{n})\rightarrow H_{i}(x,\nabla u)\text{ a.e. in }\Omega.
\end{array}
\right.
\]
Moreover by (\ref{crescita}) and (\ref{crescita H}) for any $q_{i}\in\left[
1,p_{i}^{\prime}\right[  $ we have
\[
\int_{E}\left\vert a_{i}(x,\nabla u_{n})\right\vert ^{q_{i}}\leq c\left[
\left(  \int_{E}\left\vert u(x)\right\vert ^{p_{\infty}}\right)  ^{\frac
{q_{i}}{p_{i}^{\prime}}}+\left(  \int_{E}\left\vert \partial_{x_{i}%
}u(x)\right\vert ^{p_{i}}\right)  ^{\frac{q_{i}}{p_{i}^{\prime}}}\right]
\left\vert E\right\vert ^{1-\frac{q_{i}}{p_{i}^{\prime}}}.
\]
and%
\[
\int_{E}\left\vert H_{n}^{i}(x,\nabla u_{n})\right\vert ^{q_{i}}\leq c\left(
\int_{E}\left\vert \partial_{x_{i}}u(x)\right\vert ^{p_{i}}\right)
^{\frac{q_{i}}{p_{i}^{\prime}}}\left\vert E\right\vert ^{1-\frac{q_{i}}%
{p_{i}^{\prime}}}%
\]
for some positive constant $c$, for $i=1,...,N$ and for any measurable subset
$E.$ Then Vitali Theorem assures
\[
a_{i}(x,\nabla u_{n})\rightarrow a_{i}(x,\nabla u)\text{ and }H_{n}%
^{i}(x,\nabla u_{n})\rightarrow H_{i}(x,\nabla u)\text{ strongly in }L^{q_{i}%
}(\Omega)
\]
for $q_{i}\in\left[  1,p_{i}^{\prime}\right[  ,$ that allow us to pass to the
limit in the approximate problems.
\end{proof}

\begin{remark}
The last theorem still holds if in (\ref{crescita H}) we assume $b_{i}\in
L^{r_{i}}(\Omega)$ with $\frac{1}{r_{i}}=\frac{1}{p_{i}}-\frac{1}{p_{\infty}}
$ for $i=1,..,N$.
\end{remark}

\subsection{Some uniqueness results for Problem (\ref{P1}$)$}

The first uniqueness result is obtained when every $p_{i}\ $is not greater
than $2$ assuming the following Lipschitz condition on $H_{i}$%

\begin{equation}
\left\vert H_{i}(x,\xi)-H_{i}(x,\xi^{\prime})\right\vert \leq h\frac
{\left\vert \xi_{i}-\xi_{i}^{\prime}\right\vert }{\left(  \eta+\left\vert
\xi_{i}\right\vert +\left\vert \xi_{i}^{\prime}\right\vert \right)
^{\sigma_{i}}} \label{lip H}%
\end{equation}
for some constants $h>0$, $\eta>0$ and $\sigma_{i}>0$ for $i=1,...,N.$

\begin{theorem}
\label{Th+H1}Let $1<p_{i}\leq2$ if $N=2$, $\frac{2N}{N+2}\leq p_{i}\leq2$ if
$N\geq3$ and $\sigma_{i}\geq1-\frac{p_{i}}{2}$ for $i=1,...,N.$ Let us assume
(\ref{cond immersione}), (\ref{ellittic})-(\ref{g}), (\ref{monotonia forte})
with $\varepsilon=0$ and (\ref{lip H}) with $\eta>0$. Then there exists a
unique weak solution to Problem (\ref{P1}).
\end{theorem}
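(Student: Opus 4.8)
The plan is to adapt the scheme of Theorem~\ref{th chipot}, supplementing it with a treatment of the first order term. Let $u,v$ be two weak solutions of Problem~(\ref{P1}); put $w=(u-v)^{+}$, $D=\{w>0\}$, $D_{t}=\{x\in D:w<t\}$ for $0<t<\sup w$, write $a_{i}=|\partial_{x_{i}}u|+|\partial_{x_{i}}v|$, and suppose for contradiction that $|D|>0$. As in Theorem~\ref{th chipot} the test function is $\varphi=T_{t}(w)/t$: it satisfies $0\le\varphi\le1$, on $D_{t}$ one has $\varphi=w/t$ and $\partial_{x_{i}}\varphi=\tfrac1t\partial_{x_{i}}w$, on $D\setminus D_{t}$ one has $\varphi\equiv1$ and $\partial_{x_{i}}\varphi=0$, and $\partial_{x_{i}}w=\partial_{x_{i}}u-\partial_{x_{i}}v$ a.e.\ in $D$.

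Using $\varphi$ in the difference of the two weak formulations, the principal part is estimated from below purely by the strong monotonicity~(\ref{monotonia forte}) with $\varepsilon=0$ --- no Lipschitz-in-$u$ correction is needed here, since $a_{i}$ does not depend on $u$ --- which gives
\[
\frac{\alpha}{t}\sum_{i=1}^{N}\int_{D_{t}}a_{i}^{p_{i}-2}|\partial_{x_{i}}w|^{2}\ \le\ \Bigl|\sum_{i=1}^{N}\int_{\Omega}\bigl[H_{i}(x,\nabla u)-H_{i}(x,\nabla v)\bigr]\varphi\Bigr|.
\]
To the right-hand side one applies~(\ref{lip H}) and Young's inequality. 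On $D_{t}$, since $\partial_{x_{i}}w=t\,\partial_{x_{i}}\varphi$ and $w\le t$, the integrand is controlled by $h\,t\,|\partial_{x_{i}}\varphi|(\eta+a_{i})^{-\sigma_{i}}\le\tfrac{\delta}{2}\,t\,a_{i}^{p_{i}-2}|\partial_{x_{i}}\varphi|^{2}+\tfrac{1}{2\delta}\,t\,a_{i}^{2-p_{i}}(\eta+a_{i})^{-2\sigma_{i}}$, and here the assumption $\sigma_{i}\ge1-\tfrac{p_{i}}{2}$, i.e.\ $2\sigma_{i}\ge2-p_{i}$, makes the weight bounded: $a_{i}^{2-p_{i}}(\eta+a_{i})^{-2\sigma_{i}}\le\eta^{\,2-p_{i}-2\sigma_{i}}$; after absorbing the $\delta$-term into the left-hand side, this part contributes only a multiple of $t\,|D_{t}|$. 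The genuinely new point with respect to Theorem~\ref{th chipot} is that the pieces of the $H$-term living on $D\setminus D_{t}$ (where $\varphi\equiv1$) must be shown to carry the same factor $t$; here one uses~(\ref{crescita H}) and the fact that $(\eta+a_{i})^{1-\sigma_{i}}\in L^{1}(D)$ (true since $1-\sigma_{i}\le p_{i}$). The outcome should be a localized Caccioppoli-type inequality
\[
\sum_{i=1}^{N}\int_{D_{t}}a_{i}^{p_{i}-2}|\partial_{x_{i}}\varphi|^{2}\ \le\ c\,\Bigl(|D_{t}|+\sum_{i=1}^{N}\int_{D_{t}}a_{i}^{p_{i}}\Bigr),
\]
whose right-hand side tends to $0$ as $t\to0^{+}$, because $|D_{t}|\to0$ and $a_{i}^{p_{i}}\in L^{1}(\Omega)$.

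Finally one closes exactly as in Theorem~\ref{th chipot}: by $p_{i}\ge\frac{2N}{N+2}$ for $N\ge3$ (resp.\ $p_{i}>1$ for $N=2$) the harmonic mean $\overline p$ satisfies $\overline p^{\,*}\ge2$, hence $W^{1,\overrightarrow p}_{0}(\Omega)\hookrightarrow L^{2}(\Omega)$; fixing an index $j$, the Poincaré inequality~(\ref{dis poincare}) together with Young's inequality give $|D\setminus D_{t}|\le C_{P}\int_{D_{t}}|\partial_{x_{j}}\varphi|\le\tfrac{C_{P}}{2}\bigl(\int_{D_{t}}a_{j}^{p_{j}-2}|\partial_{x_{j}}\varphi|^{2}+\int_{D_{t}}a_{j}^{2-p_{j}}\bigr)$, and the Caccioppoli inequality bounds the first integral; since $a_{j}^{2-p_{j}}\le1+a_{j}^{p_{j}}\in L^{1}(\Omega)$ (using $p_{j}>1$), every term on the right tends to $0$. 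Thus $|D|=\lim_{t\to0^{+}}|D\setminus D_{t}|=0$, and by symmetry $|\{v>u\}|=0$ as well, so $u=v$. I expect the main obstacle to be precisely the step highlighted above --- extracting from the first order term, after~(\ref{lip H}) and Young's inequality, the same power of $t$ that the strong monotonicity supplies, so that the Caccioppoli right-hand side remains $o(1)$ as $t\to0$; the exponent conditions $\sigma_{i}\ge1-\frac{p_{i}}{2}$ and $\frac{2N}{N+2}\le p_{i}\le2$ are tuned exactly so as to keep the weights $a_{i}^{2-p_{i}}(\eta+a_{i})^{-2\sigma_{i}}$, $a_{i}^{2-p_{i}}$ bounded and $L^{1}$ and to make $L^{2}$ available as a target space, while everything else is the routine Hölder/Young bookkeeping already appearing in the proofs of Theorems~\ref{th p>2}--\ref{th chipot}.
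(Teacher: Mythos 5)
There is a genuine gap, and it sits exactly at the step you yourself flag as the main obstacle. Write $\rho_{i}=|\partial_{x_{i}}u|+|\partial_{x_{i}}v|$. With $\varphi=T_{t}(w)/t$ the difference of the two weak formulations reads
\[
\sum_{i=1}^{N}\int_{D_{t}}\bigl[a_{i}(x,\nabla u)-a_{i}(x,\nabla v)\bigr]\partial_{x_{i}}\varphi=-\sum_{i=1}^{N}\int_{D}\bigl[H_{i}(x,\nabla u)-H_{i}(x,\nabla v)\bigr]\varphi,
\]
and since $\partial_{x_{i}}w=t\,\partial_{x_{i}}\varphi$ enters the left-hand side quadratically, (\ref{monotonia forte}) bounds it below by $\alpha\,t\sum_{i}\int_{D_{t}}\rho_{i}^{p_{i}-2}|\partial_{x_{i}}\varphi|^{2}$, i.e.\ with an explicit factor $t$. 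On the right, the portion over $D\setminus D_{t}=\{w\ge t\}$, where $\varphi\equiv1$, is controlled via (\ref{lip H}) only by $h\sum_{i}\int_{\{w\ge t\}}|\partial_{x_{i}}w|\,(\eta+\rho_{i})^{-\sigma_{i}}$. This quantity carries no factor of $t$ and, as $t\to0^{+}$, it \emph{increases} to the corresponding integral over all of $D$: the integrability of $(\eta+\rho_{i})^{1-\sigma_{i}}$ you invoke gives finiteness, not smallness, and $|D\setminus D_{t}|\to|D|>0$ rather than $0$. After dividing by $t$ this contribution blows up like $1/t$, so the Caccioppoli-type inequality you want cannot be derived. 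The structural point is that in Theorem \ref{th chipot} the perturbation is paired with $\partial_{x_{i}}\varphi$, which is supported in $D_{t}$ where moreover $|u-v|<t$, so the powers of $t$ cancel; here $H_{i}$ is paired with $\varphi$ itself, which lives on all of $D$, and the $T_{t}(w)/t$, $t\to0$ scheme is the wrong tool for Problem (\ref{P1}).

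The paper's proof goes the other way: it tests with $w_{t}=(w-t)^{+}$ and lets $t\to\sup w$. Then the lower-order term is integrated against $w_{t}$, and the Lipschitz bound (\ref{lip H}) produces a factor $|\partial_{x_{i}}w_{t}|$, which vanishes a.e.\ where $\nabla w=0$ (in particular on $\{w=\sup w\}$), so every integral localizes to $E_{t}=\{t<w<\sup w\}$, whose measure does shrink. Young's inequality, using $\sigma_{i}\ge1-\frac{p_{i}}{2}$, yields $\sum_{i}\int_{E_{t}}\rho_{i}^{p_{i}-2}|\partial_{x_{i}}w_{t}|^{2}\le c\int_{E_{t}}w_{t}^{2}$, and the anisotropic Sobolev inequality (\ref{sobolev 2}) with target exponent $2$ --- this is where $p_{i}\ge\frac{2N}{N+2}$ enters, together with $(2-p_{i})\frac{N}{2}\le p_{i}$ --- allows one to divide out $\int_{E_{t}}w_{t}^{2}$ and arrive at $1/C_{S}^{2}\le c\sum_{i}\bigl(\int_{E_{t}}\rho_{i}^{(2-p_{i})N/2}\bigr)^{2/N}\to0$, a contradiction. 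Note that in your version the hypothesis $p_{i}\ge\frac{2N}{N+2}$ is never actually used (your closing step only needs Poincar\'{e} in $L^{1}$), which is a further indication that the argument is not engaging the assumptions where they are needed.
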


\begin{proof}
Let us suppose $u$ and $v$ are two weak solutions to Problem (\ref{P1}) and
denote $w=\left(  u-v\right)  ^{+}$ and $E_{t}=\left\{  x\in\Omega:t<w<\sup
w\right\}  $ for $t\in\left[  0,\sup w\right[  .$ We use
\[
w_{t}=\left\{
\begin{array}
[c]{lll}%
w(x)-t &  & \text{if }w(x)>t\\
0 &  & \text{otherwise}%
\end{array}
\right.
\]
as test function in the difference of the equations. Strong monotonicity
(\ref{monotonia forte}) with $\varepsilon=0$ and the Lipschitz condition
(\ref{lip H}) with $\eta>0$ give%
\[
\overset{N}{\underset{i=1}{\sum}}\int_{E_{t}}\frac{\left\vert \partial_{x_{i}%
}w_{t}\right\vert ^{2}}{\left(  \left\vert \partial_{x_{i}}u\right\vert
+\left\vert \partial_{x_{i}}v\right\vert \right)  ^{2-p_{i}}}\leq\frac
{h}{\alpha}\overset{N}{\underset{i=1}{\sum}}\int_{E_{t}}\frac{\left\vert
\partial_{x_{i}}w_{t}\right\vert w_{t}}{\left(  \eta+\left\vert \partial
_{x_{i}}u\right\vert +\left\vert \partial_{x_{i}}v\right\vert \right)
^{\sigma_{i}}}%
\]
Since $\sigma_{i}\geq1-\frac{p_{i}}{2}$ by Young inequality and some easy
computations we have
\begin{equation}
\overset{N}{\underset{i=1}{\sum}}\int_{E_{t}}\frac{\left\vert \partial_{x_{i}%
}w_{t}\right\vert ^{2}}{\left(  \left\vert \partial_{x_{i}}u\right\vert
+\left\vert \partial_{x_{i}}v\right\vert \right)  ^{2-p_{i}}}\leq c\int
_{E_{t}}w_{t}^{2} \label{stima}%
\end{equation}
for some positive constant $c$ independent on $t.$ Moreover by
(\ref{sobolev 2}) and H\"{o}lder inequality we get%
\begin{align*}
\frac{1}{C_{S}}\left(  \int_{E_{t}}w_{t}^{2}\right)  ^{\frac{1}{2}}  &
\leq\overset{N}{\underset{i=1}{%
{\displaystyle\prod}
}}\left(  \int_{E_{t}}\left\vert \partial_{x_{i}}w_{t}\right\vert ^{\frac
{2N}{N+2}}\right)  ^{\frac{N+2}{2N^{2}}}\\
\!  &  \leq\!\overset{N}{\underset{i=1}{%
{\displaystyle\prod}
}}\!\left(  \!\int_{\!E_{t}}\!\!\frac{\left\vert \partial_{x_{i}}%
w_{t}\right\vert ^{2}}{\left(  \left\vert \partial_{x_{i}}u\right\vert
\!+\!\left\vert \partial_{x_{i}}v\right\vert \right)  ^{2-p_{i}}}\!\right)
^{\!\frac{1}{2N}}\!\!\left(  \!\int_{\!E_{t}}\!\!\!\left(  \left\vert
\partial_{x_{i}}u\right\vert \!+\!\left\vert \partial_{x_{i}}v\right\vert
\right)  ^{\left(  2-p_{i}\right)  \frac{N}{2}}\!\right)  ^{\!\frac{1}{N^{2}}%
}\!,\!
\end{align*}
then by (\ref{geo-arti}) we obtain%
\[
\!\frac{1}{C_{S}^{2}}\!\int_{E_{t}}w_{t}^{2}\!\leq\!N^{2}\!\overset
{N}{\underset{i=1}{\sum}}\!\int_{\!E_{t}}\!\frac{\left\vert \partial_{x_{i}%
}w_{t}\right\vert ^{2}}{\left(  \left\vert \partial_{x_{i}}u\right\vert
+\left\vert \partial_{x_{i}}v\right\vert \right)  ^{2-p_{i}}}\!\overset
{N}{\underset{i=1}{\sum}}\!\left(  \!\int_{\!E_{t}}\!\left(  \left\vert
\partial_{x_{i}}u\right\vert \!+\!\left\vert \partial_{x_{i}}v\right\vert
\!\right)  ^{\!\left(  2-p_{i}\right)  \frac{N}{2}}\!\right)  ^{\!\frac{2}{N}%
}.
\]
Finally using (\ref{stima}) we get%
\[
\frac{1}{C_{S}^{2}}\leq cN^{2}\overset{N}{\underset{i=1}{\sum}}\left(
\int_{E_{t}}\left(  \left\vert \partial_{x_{i}}u\right\vert +\left\vert
\partial_{x_{i}}v\right\vert \right)  ^{\left(  2-p_{i}\right)  \frac{N}{2}%
}\right)  ^{\frac{2}{N}}.
\]
Since $\left(  2-p_{i}\right)  \frac{N}{2}\leq p_{i}$ we have%
\[
\underset{t\rightarrow\sup w}{\lim}\int_{E_{t}}\left(  \left\vert
\partial_{x_{i}}u\right\vert +\left\vert \partial_{x_{i}}v\right\vert \right)
^{\left(  2-p_{i}\right)  \frac{N}{2}}=0,
\]
that gives a contradiction.
\end{proof}

\bigskip

The second result is obtained when every $p_{i}$ is greater than $2$ but
$\varepsilon>0$ in (\ref{monotonia forte}) and we assume the following
Lipschitz condition on $H_{i}$
\begin{equation}
\left\vert H_{i}(x,\xi)-H_{i}(x,\xi^{\prime})\right\vert \leq h_{i}(x)\left(
\left\vert \xi_{i}\right\vert +\left\vert \xi_{i}^{\prime}\right\vert \right)
^{\sigma_{i}}\left\vert \xi_{i}-\xi_{i}^{\prime}\right\vert \label{lip H 2}%
\end{equation}
with $\sigma_{i}\geq0,$ $h_{i}\in L^{s_{i}}(\Omega)$ and $s_{i}\geq
\frac{p_{\infty}p_{i}}{p_{\infty}-p_{i}}.$\bigskip

\begin{theorem}
\label{th+H2} Let us suppose
\[
N\geq3,\text{ }2\leq p_{i}\leq\frac{2Ns_{i}}{Ns_{i}-2s_{i}-2N},
\]%
\[
s_{i}\geq\max\left\{  N,\frac{p_{\infty}p_{i}}{p_{\infty}-p_{i}}\right\}
\text{ and }0\leq\sigma_{i}\leq\frac{p_{i}}{N}-\frac{p_{i}}{s_{i}}+\frac
{p_{i}-2}{2}%
\]
for $i=1,...,N.$ Let us assume (\ref{cond immersione}), (\ref{ellittic}%
)-(\ref{g}), (\ref{monotonia forte}) with $\varepsilon>0$ and (\ref{lip H 2}).
Then there exists a unique weak solution to Problem (\ref{P1}).
\end{theorem}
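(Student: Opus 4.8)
The plan is to argue by contradiction, following the scheme of the proof of Theorem~\ref{Th+H1} (the case $p_{i}\le2$, $\varepsilon=0$) but now using $\varepsilon>0$ and $p_{i}\ge2$ to control the strong--monotonicity weight $(\varepsilon+|\partial_{x_{i}}u|+|\partial_{x_{i}}v|)^{p_{i}-2}$, exactly as in the proof of Theorem~\ref{th p>2}. Let $u$, $v$ be two weak solutions of Problem~(\ref{P1}), put $w=(u-v)^{+}$, $E_{t}=\{x\in\Omega:t<w<\sup w\}$ and $w_{t}=(w-t)^{+}\in W_{0}^{1,\overrightarrow{p}}(\Omega)$ for $t\in[0,\sup w[$, and suppose, for contradiction, that $\{w>0\}$ has positive measure. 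Testing the difference of the weak formulations of $u$ and $v$ with $w_{t}$, the terms containing $f$ and $g_{i}$ cancel; since the $a_{i}$ here do not depend on $u$, their difference is controlled from below by (\ref{monotonia forte}) with $\varepsilon>0$, and the $H_{i}$--terms from above by (\ref{lip H 2}). Because $\partial_{x_{i}}w_{t}=0$ a.e.\ on $\{w=\sup w\}$, all relevant integrals localize to $E_{t}$, and one gets
\[
\alpha\sum_{i=1}^{N}\int_{E_{t}}(\varepsilon+|\partial_{x_{i}}u|+|\partial_{x_{i}}v|)^{p_{i}-2}\,|\partial_{x_{i}}w_{t}|^{2}\ \le\ \sum_{i=1}^{N}\int_{E_{t}}h_{i}\,(|\partial_{x_{i}}u|+|\partial_{x_{i}}v|)^{\sigma_{i}}\,|\partial_{x_{i}}w_{t}|\,w_{t}.
\]

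The key analytic step is a weighted Young inequality: writing the integrand on the right as the product of $(\varepsilon+|\partial_{x_{i}}u|+|\partial_{x_{i}}v|)^{(p_{i}-2)/2}|\partial_{x_{i}}w_{t}|$ and of $h_{i}\,(|\partial_{x_{i}}u|+|\partial_{x_{i}}v|)^{\sigma_{i}}(\varepsilon+|\partial_{x_{i}}u|+|\partial_{x_{i}}v|)^{-(p_{i}-2)/2}\,w_{t}$, Young's inequality with a small parameter lets us absorb the first factor into the left--hand side, so that
\[
\sum_{i=1}^{N}\int_{E_{t}}(\varepsilon+|\partial_{x_{i}}u|+|\partial_{x_{i}}v|)^{p_{i}-2}|\partial_{x_{i}}w_{t}|^{2}\ \le\ c\sum_{i=1}^{N}\int_{E_{t}}h_{i}^{2}\,\frac{(|\partial_{x_{i}}u|+|\partial_{x_{i}}v|)^{2\sigma_{i}}}{(\varepsilon+|\partial_{x_{i}}u|+|\partial_{x_{i}}v|)^{p_{i}-2}}\,w_{t}^{2},
\]
with $c$ depending on $\alpha$ (and, through the next reductions, on the fixed $\varepsilon$). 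Since $p_{i}\ge2$ and $\varepsilon>0$, the left--hand side dominates $\varepsilon^{p_{i}-2}\sum_{i}\int_{E_{t}}|\partial_{x_{i}}w_{t}|^{2}$, while on the right the quotient is bounded by $(|\partial_{x_{i}}u|+|\partial_{x_{i}}v|)^{2\sigma_{i}-(p_{i}-2)}$ when $2\sigma_{i}\ge p_{i}-2$, and by an $\varepsilon$--constant when $2\sigma_{i}<p_{i}-2$. Using the Sobolev embedding $W_{0}^{1,2}(\Omega)\hookrightarrow L^{2^{*}}(\Omega)$ with $2^{*}=2N/(N-2)$ (a particular case of (\ref{sobolev}) and (\ref{geo-arti}), legitimate since $\partial_{x_{i}}w_{t}\in L^{p_{i}}(\Omega)\subset L^{2}(\Omega)$ and $N\ge3$) applied to $w_{t}$, we arrive at
\[
\|w_{t}\|_{L^{2^{*}}(\Omega)}^{2}\ \le\ c\sum_{i=1}^{N}\int_{E_{t}}h_{i}^{2}\,(|\partial_{x_{i}}u|+|\partial_{x_{i}}v|)^{2\sigma_{i}-(p_{i}-2)}\,w_{t}^{2}
\]
(the gradient factor being replaced by a constant when $2\sigma_{i}<p_{i}-2$).

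I would then estimate each term on the right by Hölder's inequality with the exponents $s_{i}/2$, $\alpha_{i}$ and $2^{*}/2=N/(N-2)$ applied respectively to $h_{i}^{2}$, to the gradient power and to $w_{t}^{2}$. The balance $2/s_{i}+1/\alpha_{i}+(N-2)/N=1$ forces $1/\alpha_{i}=2/N-2/s_{i}$, positive precisely because $s_{i}\ge N$, and the requirement $(|\partial_{x_{i}}u|+|\partial_{x_{i}}v|)^{2\sigma_{i}-(p_{i}-2)}\in L^{\alpha_{i}}(\Omega)$, using $|\partial_{x_{i}}u|+|\partial_{x_{i}}v|\in L^{p_{i}}(\Omega)$, is exactly $(2\sigma_{i}-(p_{i}-2))\alpha_{i}\le p_{i}$, i.e. $\sigma_{i}\le p_{i}/N-p_{i}/s_{i}+(p_{i}-2)/2$; the assumption $2\le p_{i}\le 2Ns_{i}/(Ns_{i}-2s_{i}-2N)$ is the restriction guaranteeing that all the exponents in play lie in the admissible range (and, in the sub-case $2\sigma_{i}<p_{i}-2$, that the two--exponent pairing of $h_{i}^{2}$ with $w_{t}^{2}$ produces $\|w_{t}\|_{L^{2s_{i}/(s_{i}-2)}(E_{t})}\le|E_{t}|^{1/N-1/s_{i}}\|w_{t}\|_{L^{2^{*}}(\Omega)}$ with a nonnegative power of $|E_{t}|$). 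Bounding $\|w_{t}\|_{L^{2^{*}}(E_{t})}\le\|w_{t}\|_{L^{2^{*}}(\Omega)}$ and dividing by $\|w_{t}\|_{L^{2^{*}}(\Omega)}^{2}$, which is strictly positive for $t<\sup w$ (otherwise $w\le t$ a.e., contrary to the definition of $\sup w$), one obtains
\[
1\ \le\ c\sum_{i=1}^{N}\|h_{i}\|_{L^{s_{i}}(E_{t})}^{2}\,\|(|\partial_{x_{i}}u|+|\partial_{x_{i}}v|)^{2\sigma_{i}-(p_{i}-2)}\|_{L^{\alpha_{i}}(E_{t})}.
\]
As $t\to\sup w$ one has $|E_{t}|\to0$, and since $h_{i}^{s_{i}}$ and $(|\partial_{x_{i}}u|+|\partial_{x_{i}}v|)^{(2\sigma_{i}-(p_{i}-2))\alpha_{i}}$ belong to $L^{1}(\Omega)$, absolute continuity of the Lebesgue integral forces the right--hand side to $0$: a contradiction. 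Hence $(u-v)^{+}=0$ a.e., and exchanging $u$ and $v$ gives $u=v$.

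The step I expect to be the main obstacle is the exponent bookkeeping in the last two displays: verifying that, under the three arithmetic restrictions on $p_{i}$, $s_{i}$, $\sigma_{i}$, the chain weighted Young $\to$ Sobolev $\to$ triple Hölder really closes, and treating cleanly the degenerate sub-case $2\sigma_{i}<p_{i}-2$ (where the gradient power disappears) as well as the possibility that $\{w=\sup w\}$ has positive measure --- in which case only the gradient integrals, not $\|w_{t}\|_{L^{2^{*}}}$, localize to $E_{t}$, which is all the argument needs, since $\nabla w=0$ a.e.\ on level sets. One should also keep track of the (harmless) dependence of the constants on the fixed $\varepsilon>0$, and note that when $\varepsilon=0$ the monotonicity weight can degenerate and the scheme breaks down, in accordance with the non-uniqueness for the $p$--Laplacian with $p>2$.
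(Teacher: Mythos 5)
Your proposal is correct and follows essentially the same route as the paper: the same test function $w_{t}$ on $E_{t}$, the same Young-inequality split into the cases $2\sigma_{i}\ge p_{i}-2$ and $2\sigma_{i}<p_{i}-2$ using $\varepsilon>0$ and $p_{i}\ge2$, the Sobolev embedding into $L^{2^{*}}$, and a H\"older/absolute-continuity argument as $t\to\sup w$ leading to the same arithmetic condition $\sigma_{i}\le \frac{p_{i}}{N}-\frac{p_{i}}{s_{i}}+\frac{p_{i}-2}{2}$. The only cosmetic difference is that you perform a single triple H\"older where the paper first pairs off $w_{t}^{2^{*}}$ against the remaining factor raised to the power $N/2$ and then checks its integrability.
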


\begin{proof}
Arguing as in the proof of Theorem \ref{Th+H1} we get%
\begin{equation}
\overset{N}{\underset{i=1}{\sum}}\!\int_{\!E_{t}}\!\!\!\left\vert
\partial_{x_{i}}w_{t}\right\vert ^{\!2\!}\!\left(  \!\varepsilon
\!+\!\left\vert \partial_{x_{i}}u\right\vert \!+\!\left\vert \partial_{x_{i}%
}v\right\vert \!\right)  ^{\!p_{i}\!-\!2\!}\!\!\leq\!\frac{1}{\alpha
}\!\overset{N}{\underset{i=1}{\sum}}\!\int_{\!E_{t}}\!\!\!h_{i}\!\left(
\!\left\vert \partial_{x_{i}}u\right\vert \!+\!\left\vert \partial_{x_{i}%
}v\right\vert \!\right)  ^{\!\sigma_{i}\!}\!\left\vert \partial_{x_{i}}%
\!w_{t}\right\vert \!w_{t}. \label{eqqqq2}%
\end{equation}
If $\sigma_{i}\geq\frac{p_{i}-2}{2}$ by Young inequality we have
\begin{equation}
\!\!\!\overset{N}{\underset{i=1}{\sum}}\!\int_{\!E_{t}}\!\!\!\left\vert
\partial_{x_{i}}w_{t}\right\vert ^{\!2\!}\!\left(  \!\varepsilon
\!+\!\left\vert \partial_{x_{i}}u\right\vert \!+\!\left\vert \partial_{x_{i}%
}v\right\vert \!\right)  ^{\!p_{i}\!-\!2\!}\!\!\leq\!c_{1}\!\overset
{N}{\underset{i=1}{\sum}}\!\int_{\!E_{t}}\!\!h_{\!i}^{\!2}\!\left(
\!\left\vert \partial_{x_{i}}u\right\vert \!+\!\left\vert \partial_{x_{i}%
}v\right\vert \!\right)  ^{2\sigma_{i}-\left(  p_{i}-2\right)  }w_{t}^{2}
\label{stima 2}%
\end{equation}
for some positive constant $c_{1}$ independent on $t$. Using inequalities
(\ref{sobolev 2}) and (\ref{geo-arti}), H\"{o}lder inequality and
(\ref{stima 2}) we obtain%
\begin{align*}
\!\frac{1}{C_{S}}\!\left(  \!\int_{\!E_{t}}\!w_{t}^{2^{\ast}}\!\right)
^{\!\frac{2}{2^{\ast}}}\!  &  \leq\!\overset{N}{\underset{i=1}{%
{\displaystyle\prod}
}}\!\left(  \!\int_{\!E_{t}}\!\left\vert \partial_{x_{i}}w_{t}\right\vert
^{2}\!\right)  ^{\!\frac{1}{N}}\!\leq\!c_{2}\!\overset{N}{\underset{i=1}{%
{\displaystyle\prod}
}}\!\left(  \!\int_{\!E_{t}}\!\left\vert \partial_{x_{i}}w_{t}\right\vert
^{2}\!\left(  \!\varepsilon\!+\!\left\vert \partial_{x_{i}}u\right\vert
\!+\!\left\vert \partial_{x_{i}}v\right\vert \!\right)  ^{\!p_{i}%
-2\!}\!\right)  ^{\!\frac{1}{N}\!}\!\\
&  \leq c_{2}\overset{N}{\underset{i=1}{\sum}}\int_{E_{t}}h_{i}^{2}\left(
\left\vert \partial_{x_{i}}u\right\vert +\left\vert \partial_{x_{i}%
}v\right\vert \right)  ^{2\sigma_{i}-\left(  p_{i}-2\right)  }w_{t}^{2}\\
&  \leq c_{2}\left(  \int_{E_{t}}w_{t}^{2^{\ast}}\right)  ^{\frac{2}{2^{\ast}%
}}\overset{N}{\underset{i=1}{\sum}}\left(  \int_{E_{t}}h_{i}^{N}\left(
\left\vert \partial_{x_{i}}u\right\vert +\left\vert \partial_{x_{i}%
}v\right\vert \right)  ^{\left(  2\sigma_{i}-p_{i}+2\right)  \frac{N}{2}%
}\right)  ^{\frac{2}{N}},
\end{align*}
i.e.%
\[
\frac{1}{C_{S}}\leq c_{2}\overset{N}{\underset{i=1}{\sum}}\left(  \int_{E_{t}%
}h_{i}^{N}\left(  \left\vert \partial_{x_{i}}u\right\vert +\left\vert
\partial_{x_{i}}v\right\vert \right)  ^{\left(  2\sigma_{i}-p_{i}+2\right)
\frac{N}{2}}\right)  ^{\frac{2}{N}}%
\]
for some\ positive constant $c_{2}$ (independent on $t)$, that can be vary
from line to line. Since $\frac{N}{s_{i}}+\frac{\left(  2\sigma_{i}%
-p_{i}+2\right)  }{p_{i}}\frac{N}{2}\leq1$ we have%
\[
\underset{t\rightarrow\sup w}{\lim}\int_{E_{t}}h_{i}^{N}\left(  \left\vert
\partial_{x_{i}}u\right\vert +\left\vert \partial_{x_{i}}v\right\vert \right)
^{\left(  2\sigma_{i}-p_{i}+2\right)  \frac{N}{2}}=0,
\]
that gives a contradiction.

\noindent Conversely if $\sigma_{i}<\frac{p_{i}-2}{2}$ by (\ref{eqqqq2}) and
Young inequality we have%
\begin{equation}
\!\!\!\overset{N}{\underset{i=1}{\sum}}\!\int_{\!E_{t}}\!\!\!\left\vert
\partial_{x_{i}}w_{t}\right\vert ^{\!2\!}\!\!\!\leq\!c_{3}\!\overset
{N}{\underset{i=1}{\sum}}\!\int_{\!E_{t}}\!\!h_{\!i}^{2\!}\!\left(
\!\left\vert \partial_{x_{i}}u\right\vert \!+\!\left\vert \partial_{x_{i}%
}v\right\vert \!\right)  ^{2\sigma_{i}}w_{t}^{2} \label{stima22}%
\end{equation}
for some positive constant $c_{3}$ independent on $t$. Using inequalities
(\ref{sobolev 2}) and (\ref{geo-arti}), H\"{o}lder inequality and
(\ref{stima22}) as before we obtain%
\[
\frac{1}{C_{S}}\leq c_{4}\overset{N}{\underset{i=1}{\sum}}\left(  \int_{E_{t}%
}h_{i}^{N}\left(  \left\vert \partial_{x_{i}}u\right\vert +\left\vert
\partial_{x_{i}}v\right\vert \right)  ^{N\sigma_{i}}\right)  ^{\frac{2}{N}}%
\]
for some\ positive constant $c_{4}$ independent on $t$. Since $\frac{N}{s_{i}%
}+\frac{\sigma_{i}N}{p_{i}}\leq1$ the assert.
\end{proof}

\bigskip

\begin{remark}
We observe that for $p_{i}>\frac{2Ns_{i}}{Ns_{i}-2s_{i}-2N}$, Theorem
\ref{th+H2} holds for $0\leq\sigma_{i}\leq\frac{p_{i}}{N}-\frac{p_{i}}{s_{i}}$
or $\frac{p_{i}-2}{2}\leq\sigma_{i}\leq\frac{p_{i}}{N}-\frac{p_{i}}{s_{i}%
}+\frac{p_{i}-2}{2}.$ Moreover Theorems \ref{Th+H1} and \ref{th+H2} hold if in
(\ref{P1}) we add the term $c(x,u)$ as in Remark \ref{remark b}.
\end{remark}

\end{document}